\newtheorem{thm}{Theorem}
\newtheorem{res}{Result}
\newtheorem{prop}[thm]{Proposition}
\newtheorem{lem}[thm]{Lemma}
\newtheorem{cor}[thm]{Corollary}
\theoremstyle{definition}
\newtheorem{defn}{Definition}
\theoremstyle{remark}
\newtheorem{rem}{Remark}
\def\C{\mathbb C}
\def\hol{\mathcal O}
\def\O{\Omega}
\def\k{\kappa}
\def\ov{\overline}
\def\lk{ l^{\kappa}}
\def\Cn{\mathbb{C}^n}
\renewcommand{\Im}{\operatorname{\rm{Im}}}
\renewcommand{\Re}{\operatorname{\rm{Re}}}
\begin{document}
	
	\title[On non-visibility of Kobayashi geodesics and the geometry of the boundary]
	{On non-visibility of Kobayashi geodesics and the geometry of the boundary}
	
	\author{Ahmed Yekta {\"O}kten}
	
	\address{A.Y.Ökten\\
		Institut de Math\'ematiques de Toulouse; UMR5219 \\
		Universit\'e de Toulouse; CNRS \\
		UPS, F-31062 Toulouse Cedex 9, France} \email{ahmed$\_$yekta.okten@math.univ-toulouse.fr}

\begin{abstract}  We show that on convex domains with $\mathcal C^{1,\alpha}$-smooth boundary the limit set of non-visible Kobayashi geodesics are contained in a complex face. In $\C^2$ this implies the existence of a complex tangential line segment of non-Goldilocks point in the boundary. Conversely, we construct sequences of non-visible almost-geodesics on ($\C-$)convex domains whose boundary contains a complex tangential line segment of non-Goldilocks points in a specific direction. 
	%This gives an implication that Goldilocks conditions are somewhat necessary for visibility. 
\end{abstract}

\thanks{The author received support from the University Research School EUR-MINT (State support managed
	by the National Research Agency for Future Investments program bearing the reference ANR-18-EURE-0023).}

\subjclass[2020]{32F45}

\keywords{Kobayashi distance, Kobayashi-Royden pseudometric, (almost) geodesics, visibility, Goldilocks domains}

\maketitle
\section{Introduction}

The Kobayashi distance is a fundamental holomorphic invariant. The study of its metric geometric properties turned out to be an effective tool; for instance, Gromov hyperbolicity of the Kobayashi distance is exploited to obtain results about iteration theory and boundary extension of holomorphic maps. Zimmer \cite{Z2} showed that a smoothly bounded convex domain is Gromov hyperbolic if and only if it is of finite D'Angelo type, establishing a connection between the metric geometry of the Kobayashi distance and the Euclidean geometry of the boundary. 

However, for many applications, the somewhat weaker property of visibility is sufficient. Informally speaking, it means that all geodesics joining points tending to two distinct given boundary points have to pass through the same compact set, staying away from the boundary. Bharali-Zimmer \cite{BZ} defined a class of domains called \emph{Goldilocks domains} in terms of the growth of the Kobayashi distance and the Kobayashi-Royden pseudometric, and they showed that such domains satisfy visibility property for almost-geodesics, i.e. the curves that approximate Kobayashi geodesics in a suitable sense. The paper \cite{BZ2} defined the notion of \emph{local Goldilocks points} in a similar manner and showed that local Goldilocks points cannot be in the limit set of almost-geodesics tending to the boundary.

In the case of convex domains, local Goldilocks points can be described in terms of the "flatness" of the boundary. In particular finite type boundary points are local Goldilocks points, yet local Goldilocks point conditions allow infinite type to some extent. On the other hand, \cite[Proposition 5.4]{BNT} provides an example of a convex domain with a complex tangential line segment of infinite type points which fails the visibility property. 

The goal of this note is to present some sufficient and necessary conditions for the failure of visibility on convex domains in terms of the Euclidean geometry of the boundary. Notably, we prove Theorem \ref{thm:convexface} which shows that on convex domains with $\mathcal{C}^{1,\alpha}$-smooth boundary, Kobayashi geodesics (and also certain almost-geodesics) tending to the boundary must tend to a complex hyperplane. In particular, in $\C^2$ (see Corollary \ref{cor:nonvisibilityinc2}) this implies that the boundary is flat in the precise sense that it contains a complex tangential line segment of non-Goldilocks points. The proof of this result follows from Proposition \ref{prop:commongromovpartner}, where we improve several results in the literature that relate visibility to the growth of Kobayashi distance.

Conversely, inspired by \cite[Proposition 5.4]{BNT} we prove Theorem \ref{thm:nonvisibleinC2}, whose proof shows that convex domains in $\C^2$ with a boundary containing a complex tangential line segment of points with a strong enough flatness (so in particular not Goldilocks) possess almost-geodesics that tend to the boundary. Consequently, we observe that the local Goldilocks point conditions are somewhat necessary for visibility, providing a partial converse to \cite[Theorem 1.4]{BZ}. Theorem \ref{thm:nonvisibleinC2} extends to higher dimensions and to $\C$-convex domains under additional assumptions (see Corollaries \ref{cor:corollaryforCconvex} and \ref{cor:howtheresultextendtohigherdimensions}). Notably, the domains given in Corollary \ref{cor:howtheresultextendtohigherdimensions} are not Gromov hyperbolic with respect to the Kobayashi distance, and this improves \cite[Theorem 1.5]{Z2}.

\section{Preliminaries}

Let $\Omega$ be a domain in $\C^n$, $z,w\in \Omega$ and $v\in\Cn$.
Recall that the Kobayashi pseudodistance $k_\Omega$ is the largest pseudodistance
that does not exceed the Lempert function
$$ l_\O(z,w):=\tanh^{-1} \tilde{l}_\O(z,w),$$
where $\Delta$ is the
unit disc and $\tilde{l}_\Omega(z,w):=\inf\{|\alpha|:\exists\varphi\in\hol(\Delta,\O)
\hbox{ with }\varphi(0)=z,\varphi(\alpha)=w\} $. Explicitly, the Kobayashi length of any continuous curve $\gamma:[0,1]\rightarrow\Omega$ is defined as 
$$\displaystyle{l^k_\O(\gamma):=\inf_{n\in\mathbb{N}} \left\{ \sum_{j=1}^n l_\Omega(z_{j-1},z_j):z_0=\gamma(0), z_n=\gamma(1), \: z_j\in\gamma([0,1])\right\}} $$ and
$k_\Omega(z,w):=\displaystyle{ \inf l^k_\O(\gamma) },$ where the infimum is taken over all continuous curves joining $z,w\in\Omega$. A continuous curve attaining the infimum above is said to be a Kobayashi geodesic. 

The Kobayashi-Royden pseudometric is given by
$$
\k_\O(z;v)=\inf\{1/r:r > 0,\exists\varphi\in\mathcal{O}(r\Delta,\O),\varphi(0)=z,\varphi'(0)=v\}.
$$

By \cite[Theorem 1.2]{V}, one may express $k_\Omega$ as the inner distance associated with the Kobayashi-Royden pseudometric. That is 
$ k_\O(z,w)=\inf\lk_\O(\gamma),$ where $$\lk_\O(\gamma):=\int_{I} \k_\O(\gamma(t),\gamma'(t))dt$$ and the infimum is taken over all absolutely continuous curves joining $z$ to $w$. 

We say that a domain $\O\subset \Cn$ is \textit{hyperbolic} if $k_\O$ is a distance and \textit{complete hyperbolic} if $(\O,k_\O)$ is complete as a metric space. For instance, any convex domain containing no affine complex lines is complete hyperbolic. The Hopf-Rinow theorem \cite[Proposition 3.7]{BH} asserts that if $\O$ is complete hyperbolic then any pair $z,w\in\O$ is joined by a Kobayashi geodesic. In general, geodesics for the Kobayashi distance need not exist. However, the definition of the Kobayashi distance as an inner distance ensures the existence of what is termed "almost geodesics" in \cite[Definition 4.1]{BZ}. In this note, we will call them $(\lambda,\epsilon)$-geodesics:

\begin{defn}
	Let $\O$ be a domain in $\Cn$, $\gamma:[0,1]\rightarrow\O$ be an absolutely continuous curve, $\lambda\geq 1$ and $\epsilon\geq 0$. We say that $\gamma$ is a \emph{$(\lambda,\epsilon)$-geodesic} if for all $t_1 \leq t_2 \in [0,1]$ we have that $$ \lk_\O(\gamma|_{[t_1,t_2]}) \leq \lambda k_\O(\gamma(t_1),\gamma(t_2))+\epsilon.$$
\end{defn}

%\begin{rem}
%If $\gamma:I\rightarrow\O$ is a $(\lambda,\epsilon)$-geodesic and $k_\O$ is non-vanishing on $\gamma(I)$, then $\gamma$ can be parametrized as a $(\lambda,\epsilon)$-almost geodesic in the sense of \cite[Definition 4.1]{BZ}. 
%\end{rem}
For brevity, throughout this note $(1,\epsilon)$-geodesics will be called "$\epsilon$-geodesics". 

%\textit{Visibility property} is a behaviour of geodesics on hyperbolic metric spaces, which roughly means that if we have a geodesic joining points near different boundary points then it must bend inside the space. 

Since the geodesics for the Kobayashi-Royden pseudometric need not exist, Bharali-Zimmer \cite{BZ} defined a notion of visibility in terms of $(\lambda,\epsilon)$-geodesics (see also the remark below \cite[Definition 1.1]{CMS}). 

\begin{defn}\label{defn:bzvisibility}
	Let $\O$ be a hyperbolic domain, $\lambda\geq 1$ and $\epsilon\geq 0$. We say that the pair of distinct points $\{p,q\}\subset\partial\O$ is a $(\lambda,\epsilon)$-visible pair if there exist neighbourhoods $U_p, U_q$ of $p,q$ respectively and $K\subset\subset\O$ such that if $\gamma:I\rightarrow\O$ is a $(\lambda,\epsilon)$-geodesic joining a point $z\in\O\cap U_p$ to a point $w\in\O\cap U_q$ then $\gamma(I)\cap K \neq \emptyset$. We say that $\O$ satisfies $(\lambda,\epsilon)$-visibility property if any pair of distinct points $\{p,q\}\subset\O$ is a $(\lambda,\epsilon)$-visible pair. We say that $\O$ satisfies the \textit{visibility property} (respectively the \textit{weak visibility property}) if it satisfies the $(\lambda,\epsilon)$-visibility property for any $\lambda \geq 1, \epsilon\geq 0$ (respectively $\lambda=1$ and $\epsilon\geq 0$).
\end{defn} 

Similarly, if $\O$ is complete hyperbolic we say that a pair of distinct points $\{p,q\}\subset\partial\O$ is a \emph{geodesic visible pair} if there exist neighbourhoods $U_p, U_q$ of $p,q$ respectively and $K\subset\subset\O$ such that if $\gamma:I\rightarrow\O$ is a Kobayashi geodesic joining a point $z\in\O\cap U_p$ to a point $w\in\O\cap U_q$ then $\gamma(I)\cap K \neq \emptyset$. We say that $\O$ satisfies the \emph{geodesic visibility property} if any pair of distinct points $\{p,q\}\subset\partial\O$ is a geodesic visible pair.

%\begin{defn}
%	Let $\O$ be a hyperbolic domain. We say that $p\in\partial\O$ is a visible point (resp. weakly visible point) if for any bounded neighbourhood $U$ of $p$, there exists another neighbourhood $V\subset\subset U$ of $p$ such that if $\gamma:I\rightarrow\O$ is a $(\lambda,\epsilon)$-geodesic (resp. $\epsilon$-geodesic) joining a point in $\O\cap V$ with a point in $\O\setminus U$ then $\gamma(I)$ intersects a fixed compact set $K\subset\subset\O$ depending on $p,\lambda,\epsilon$ (resp. $p,\epsilon$).
%\end{defn}

%If $\O$ is complete hyperbolic we can also give a similar definition considering only Kobayashi geodesics. 

%\begin{defn}
%	Let $\O$ be a complete hyperbolic domain. We say that $p\in\partial\O$ is a geodesic visible point if for any bounded neighbourhood $U$ of $p$, there exists another neighbourhood $V\subset\subset U$ of $p$ such that if $\gamma:I\rightarrow\O$ is a Kobayashi geodesic joining a point in $\O\cap V$ with a point in $\O\setminus U$ then $\gamma(I)$ intersects a fixed compact set $K\subset\subset\O$ depending on $p$.
%\end{defn}

%\begin{rem}\label{rem:visiblepointsandvisiblepairs}

Recall the definition of the Gromov product with respect to Kobayashi distance: 
$$ (z|w)_o^\O:=\dfrac{1}{2}\left(k_\O(z,o)+k_\O(w,o)-k_\O(z,w)\right), \:\:\:\: z,w\in\O.$$

By \cite[Proposition 2.4]{BNT} (see also \cite[Proposition 3.1]{CMS}) the visibility property is related to the growth of Kobayashi distance:

\begin{prop}\label{prop:growthbnt} 
	Let $\O$ be a complete hyperbolic domain. Then the following are equivalent. \begin{enumerate}
		\item $\O$ satisfies the geodesic visibility property.
		\item $\O$ satisfies the weak visibility property.
		\item 	\begin{equation}\label{eqngrowthbnt}
\forall p,q\in\partial \Omega \:\:\:\text{with}\:\:\:p\neq q, \:\:\:			\limsup_{\O\times\O \ni (z,w)\rightarrow (p,q)} (z|w)_o^\O < \infty.
		\end{equation} 
	\end{enumerate}
\end{prop}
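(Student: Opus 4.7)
The plan is to close the cycle $(2)\Rightarrow(1)\Rightarrow(3)\Rightarrow(2)$. The implication $(2)\Rightarrow(1)$ is essentially formal: since $\O$ is complete hyperbolic, Hopf--Rinow guarantees Kobayashi geodesics between any two points, and any such curve is a $(1,0)$-geodesic and hence a $(1,\epsilon)$-geodesic for every $\epsilon\geq 0$; the compact set furnished by weak visibility at $\epsilon=0$ therefore also serves geodesic visibility.

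For $(1)\Rightarrow(3)$, I argue contrapositively. If $(z_n|w_n)_o^\O\to\infty$ along sequences $z_n\to p$, $w_n\to q$ with $p\neq q\in\partial\O$, let $\gamma_n$ be a Kobayashi geodesic from $z_n$ to $w_n$. The geodesic identity $k_\O(z_n,\gamma_n(t))+k_\O(\gamma_n(t),w_n)=k_\O(z_n,w_n)$ combined with the two triangle inequalities at $o$ yields
\[
(z_n|w_n)_o^\O\leq k_\O(\gamma_n(t),o)\quad\text{for every }t.
\]
Hence $\min_t k_\O(\gamma_n(t),o)\to\infty$; since closed Kobayashi balls about $o$ are compact (Hopf--Rinow), $\gamma_n$ eventually avoids every fixed compact set, contradicting $(1)$.

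The implication $(3)\Rightarrow(2)$ is the heart of the argument, and it hinges on locating a ``transition point'' on $\gamma_n$ that converges to a third boundary point. Assume $(2)$ fails for some $\epsilon\geq 0$: there exist $p\neq q\in\partial\O$, sequences $z_n\to p$, $w_n\to q$, and $(1,\epsilon)$-geodesics $\gamma_n$ joining them that eventually leave every compact subset of $\O$. Fix Euclidean neighborhoods $U_p,U_q\subset\Cn$ of $p,q$ with disjoint closures, and let $m_n:=\gamma_n(t_n)$ be the first exit of $\gamma_n$ from $U_p$, so $m_n\in\partial U_p\cap\O$. Compactness of $\partial U_p$ in $\Cn$ gives a subsequential limit $m_n\to r\in\partial U_p\cap\overline\O$, and since $\gamma_n$ escapes compact subsets of $\O$, $k_\O(m_n,o)\to\infty$; hence $r\in\partial\O$, with $r\neq p$ and $r\neq q$ by construction. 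A direct expansion of the Gromov products gives
\[
(z_n|m_n)_o^\O+(m_n|w_n)_o^\O=(z_n|w_n)_o^\O+k_\O(m_n,o)+\tfrac12\bigl(k_\O(z_n,w_n)-k_\O(z_n,m_n)-k_\O(m_n,w_n)\bigr),
\]
and the $(1,\epsilon)$-geodesic condition $k_\O(z_n,m_n)+k_\O(m_n,w_n)\leq k_\O(z_n,w_n)+\epsilon$ bounds the last parenthesis below by $-\epsilon$. Since Gromov products are nonnegative and $k_\O(m_n,o)\to\infty$, the left-hand side diverges irrespective of the behavior of $(z_n|w_n)_o^\O$, so at least one of $(z_n|m_n)_o^\O$, $(m_n|w_n)_o^\O$ is unbounded, contradicting $(3)$ on the pair $(p,r)$ or $(r,q)$. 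The main obstacle I anticipate is precisely the boundary analysis for the transition point: verifying that the subsequential limit $r$ lies in $\partial\O$ and is genuinely separated from both $p$ and $q$, which relies on the disjointness of the Euclidean neighborhoods and on the Hopf--Rinow characterization of complete hyperbolicity. Once this geometric step is in place, the algebraic identity together with the $(1,\epsilon)$-geodesic estimate mechanically forces the divergence of one of the two Gromov products, closing the contradiction.
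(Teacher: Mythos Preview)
Your argument is correct. Note, however, that the paper does not supply its own proof of this proposition; it is quoted from \cite[Proposition~2.4]{BNT} and \cite[Proposition~3.1]{CMS}, so there is no in-paper proof to compare against line by line.

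That said, your $(3)\Rightarrow(2)$ step is worth contrasting with the machinery the paper develops immediately afterward in Proposition~\ref{prop:commongromovpartner}. You locate the intermediate point $m_n$ as the \emph{first Euclidean exit} of $\gamma_n$ from a ball around $p$, and then use the algebraic identity to force one of $(z_n|m_n)_o^\O$, $(m_n|w_n)_o^\O$ to diverge. The paper instead applies the intermediate value theorem to the ratio $h_n(t)=(z_n|\gamma_n(t))_o^\O/(w_n|\gamma_n(t))_o^\O$ to find a \emph{balanced} point where both Gromov products coincide, and hence both diverge. Your route is shorter and entirely adequate for the global equivalence stated here; the paper's balanced-point device is what buys the sharper conclusion $r\in G_p\cap G_q$ needed for the local refinement in Corollary~\ref{cor:characterizationofvisibility} and for Theorem~\ref{thm:convexface}.

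One small caution on $(2)\Rightarrow(1)$: in this paper Kobayashi geodesics are defined via the Lempert-function length $l^k_\O$, while $(1,\epsilon)$-geodesics are defined via the Kobayashi--Royden length $\lk_\O$, so the assertion that a Kobayashi geodesic is literally a $(1,0)$-geodesic is not immediate from the definitions given. The cleaner way to close the cycle, implicit in the paper's Remark~\ref{rem:remarkabouttheargumentsworkforgeodesics}, is to run $(1)\Leftrightarrow(3)$ and $(2)\Leftrightarrow(3)$ separately: in each direction one only needs the additivity $k_\O(z_n,m_n)+k_\O(m_n,w_n)\le k_\O(z_n,w_n)+\epsilon$ along the curve (with $\epsilon=0$ for genuine geodesics), which holds in both settings without ever comparing the two length functionals.
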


In Section \ref{sec:visibilityandthegromovproduct} we will give a local version of Proposition \ref{prop:growthbnt}.

\section{Failure of visibility on complete hyperbolic domains}\label{sec:visibilityandthegromovproduct}

\subsection{Limits of non-visible geodesics and the Gromov product}

Let $\O$ be a domain in $\Cn$ and set
$$ \delta_\O(z):=\inf\{\|z-w\|:w\in\partial\O\} \:\:\:\text{and}\:\:\:\delta_\O(z;v):=\sup\{r>0:z+\alpha v \in \O \:\:\: \text{for all} \:\:\: \alpha \in r \Delta\}. $$ %\inf\{|\alpha|:\alpha\in \C,\:\:\: z+\alpha v\in\partial\O\}.$$ 

Suppose that $\gamma_n: [0,1]\rightarrow \O$ is a sequence of curves satisfying $$ \lim_{n\rightarrow\infty}\max\{\delta_\Omega(\gamma_n(t)):t\in [0,1]\} = 0,$$ $\lim_{n\to\infty} \gamma_n(0) = p$, $\lim_{n\to\infty} \gamma_n(1) =q$ with $q\neq p$. We denote their limit set as 
\begin{equation}\label{eqn:limitset}
	L:=\{p\in\partial\O: \text{$p$ is a subsequential limit of a sequence $\{x_n\}$ such that $\forall n\in\mathbb N$, $x_n\in\gamma_n([0,1])$}\}.
\end{equation}

\begin{prop}\label{prop:limitsetiscontinium}
	$L\subset\partial\O$ is closed and connected, therefore it is a continuum.
\end{prop}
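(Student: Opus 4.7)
The plan is to prove closedness and connectedness of $L$ separately; the inclusion $L\subset\bd\O$ is automatic from the uniform decay $\max_t\delta_\O(\gamma_n(t))\to 0$. For \emph{closedness}, I would use a diagonal extraction: given $p_k\in L$ with $p_k\to p$, pick for each $k$ an index $n_k\to\infty$ and a parameter $t_k\in[0,1]$ with $\|\gamma_{n_k}(t_k)-p_k\|<1/k$; then $\gamma_{n_k}(t_k)\to p$, so $p\in L$, and uniform decay forces $p\in\bd\O$.

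For \emph{connectedness}, I would argue by contradiction. Suppose $L=A\sqcup B$ with $A,B$ nonempty, disjoint, and closed. Since $p=\lim\gamma_n(0)$ and $q=\lim\gamma_n(1)$ are distinct points of $L$, we may assume $p\in A$ and $q\in B$. By metric normality of $\Cn$ (take for instance $U_A:=\{z:d(z,A)<d(z,B)\}$ and $U_B:=\{z:d(z,B)<d(z,A)\}$) one obtains disjoint open sets $U_A\supset A$ and $U_B\supset B$ with $\overline{U_A}\cap\overline{U_B}=\emptyset$. For $n$ sufficiently large, $\gamma_n(0)\in U_A$ and $\gamma_n(1)\in U_B$, so by continuity of $\gamma_n$ the first-exit time $t_n^*:=\inf\{t\in[0,1]:\gamma_n(t)\notin\overline{U_A}\}$ lies in $(0,1)$ and $y_n:=\gamma_n(t_n^*)\in\partial U_A\subset V:=\Cn\setminus(U_A\cup U_B)$. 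Since $L\subset U_A\cup U_B$, one has $V\cap L=\emptyset$. Extracting a subsequence $y_{n_k}\to y^*$ yields $y^*\in V$ (as $V$ is closed) and $y^*\in L$ by the definition of $L$, contradicting $V\cap L=\emptyset$.

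The main obstacle is the extraction step, which requires $(y_n)$ to admit a convergent subsequence. This is automatic when $\O$ is bounded, which covers the setting of the paper's main applications; more generally, one can shrink $U_A$ to its intersection with a large fixed closed ball containing neighbourhoods of $p$ and $q$, so that $\partial U_A$ becomes bounded, or else work in the one-point compactification of $\Cn$ and verify that the added point does not destroy the separation. Once the extraction is secured, the remainder of the argument is purely metric-topological and invokes no further holomorphic structure of $\O$.
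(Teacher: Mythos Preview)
Your argument follows the same outline as the paper's proof: closedness via a diagonal/limit-set argument, and connectedness by contradiction, using that each connected curve $\gamma_n$ must cross the boundary of a separating open set, then extracting a limit point of $L$ lying outside both pieces. Your treatment of the extraction step is in fact more honest than the paper's, which simply writes $\lim_{n\to\infty} x_n =: r$ without comment.

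There is one small slip: the sentence ``we may assume $p\in A$ and $q\in B$'' is not justified, since both $p$ and $q$ could lie in $A$ while some third point of $L$ lies in $B$. The fix is immediate: since $B\neq\emptyset$, choose any $b\in B$; along a subsequence $(n_k)$ the curve $\gamma_{n_k}$ carries a point near $b$ (hence in $U_B$) while $\gamma_{n_k}(0)\in U_A$ for $k$ large, and your first-exit argument then runs along this subsequence. (The paper avoids this by not singling out $p$ and $q$, though its phrase ``looking at the tail'' should likewise read ``passing to a subsequence''.) As for the unbounded case, neither of your proposed repairs is fully watertight: intersecting $U_A$ with a large ball allows the first-exit point to land on the sphere inside $U_A$, so its limit may lie in $A$ and give no contradiction; and in the one-point compactification $\infty$ may itself be a cluster point of the curves, which can spoil the separation. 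The paper's proof shares this lacuna; for bounded $\O$ (which covers the applications) both arguments are complete.
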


\begin{proof}
	It is clear that $L$ is a closed set as it is defined to be a limit set. 
	
	Note that $L$ is connected if and only if it cannot be written as the union of two non-empty open seperated sets, that is for any two non-empty open sets $U, V\subset L$ such that $L=U\cup V $ we have $\overline{U}\cap V \neq \emptyset$. Let now $U_1,U_2$ be two open sets in $\mathbb{C}^n$ such that $L\subset U_1\cup U_2$. Set $V_1:=U_1\cap L$ and $V_2:=U_2\cap L$, we will show that $\overline{V_1}\cap V_2 \neq \emptyset$. We may assume that $p:=\lim_{n\to\infty}\gamma_n(0) \in U_1\setminus U_2$ and $q:=\lim_{n\to\infty}\gamma_n(0) \in U_2\setminus U_1$ otherwise $V_1\cap V_2\neq \emptyset$. 
	
	We will show that $\overline V_1 \cap V_2 \neq \emptyset$. Since each $\gamma_n([0,1])$ is connected and $p\in U_1$ and $q \notin U_1$ we may find $x_n \in \gamma_n([0,1])$ such that $x_n \in\partial U_1$. Since the images of $\gamma_n$ tend to $\partial \Omega$, by looking at a subsequence if necessary we may set $r:=\lim_{n\to\infty} x_n$. Then $r \in \partial{U_1}\cap L$. Since $L=V_1\cup V_2$ and $r \notin V_1$ it follows that $r \in V_2$. As $\overline V_1= \overline U_1 \cap L$ it follows that $\overline V_1 \cap V_2 \neq \emptyset$. As $U_1$ and $U_2$ are choosen arbitrarily the connectivity of $L$ follows.  
	
%	Choose $p_n \in L\cap U_1$ such that $p_n$ tends to a point $p\in\partial \Omega$. Since $p_n \in L$, by looking at a subsequence in necessary we may find $x_n \in \gamma_n([0,1])$ such that $\|x_n-p_n\| < \epsilon_n$ for some sequence $\epsilon_n$ that tends to zero. Set $D_n:=\max_{t\in[0,1]}\delta_\Omega(\gamma_n(t))$. Since each $\gamma_n([0,1])$ is connected and $p \in U_1$ and $q \notin U_1$ looking at the tail of the sequence if necessary we may find $x'_n \in \gamma_n([0,1]) \cap \partial U_1$ and $x'_n \in \gamma_n([0,1])\cap U_1$ with $\|x'_n-x_n\| \leq D_n$. Then looking at a subsequence if necessary we see that $\lim_{n\to\infty} x_n =: r \in L \cap \partial U_1$ hence $r \in U_2$. It follows that $ U_2 \cap \ov U_1 \neq \emptyset$. As $U_1$, $U_2$ were chosen arbitrarily we see that $L$ is connected. 
	
%	Let $D_n:=\max_{t\in[0,1]}\delta_\O(\gamma_n(t))$. 
\end{proof}

\begin{defn}
	Let $\O\subset\Cn$ be a complete hyperbolic domain, $p\in\partial\O$. We say that $q\in\partial\O$ is a \textit{Gromov partner} of $p$ if $$\displaystyle{\limsup_{\O\times\O \ni (z,w)\to (p,q)}(z|w)^\O_o=\infty}.$$
	
	We denote the set of Gromov partners of $p$ by $G_p$.
\end{defn}

It is clear that the above definition is independent from the base point $o$, and that for any $p\in\partial\O$ we have $p\in G_p$.

The main result of this section is the following:
\begin{prop}\label{prop:commongromovpartner}
	Let $\O\subset\Cn$ be a complete hyperbolic domain, and suppose that $\{p,q\}\in\partial\O$ is not a weakly visible pair. Let $\gamma_n:[0,1]\to\O$ be a sequence of $\epsilon$-geodesics such that $\max_{t\in[0,1]}\delta_\O(\gamma_n(t))\to 0$, $\gamma_n(0)\to p$ and $\gamma_n(1)\to q$. Let $L$ be as in \eqref{eqn:limitset}. \begin{enumerate} 
		\item There exists $r\in L\setminus\{p,q\}$ with $r\in G_p\cap G_q$.
		\item $L\subset G_p\cup G_q$.
	\end{enumerate}.
\end{prop}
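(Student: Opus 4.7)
The plan is to derive a uniform Gromov-product inequality along $\gamma_n$, use it to prove part (2), and then produce a balanced point on $\gamma_n$ via an intermediate value theorem for part (1).

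\textbf{Key estimate and part (2).} Since $\gamma_n$ is an $\epsilon$-geodesic, $l^\kappa_\Omega(\gamma_n)\le k_\Omega(z_n,w_n)+\epsilon$; splitting the length at $t$ and using ``length $\ge$ distance'' on each sub-curve yields
\[
k_\Omega(z_n,\gamma_n(t))+k_\Omega(\gamma_n(t),w_n)\le k_\Omega(z_n,w_n)+\epsilon.
\]
Substituting this into $2(z|w)_o^\Omega = k_\Omega(z,o)+k_\Omega(w,o)-k_\Omega(z,w)$ and using $(z_n|w_n)_o^\Omega\ge 0$ gives
\[
(z_n|\gamma_n(t))_o^\Omega+(\gamma_n(t)|w_n)_o^\Omega\ \ge\ k_\Omega(\gamma_n(t),o)-\tfrac{\epsilon}{2}.
\]
By complete hyperbolicity, closed Kobayashi balls around $o$ are compact (Hopf--Rinow), so the uniform escape hypothesis $\max_t\delta_\Omega(\gamma_n(t))\to 0$ forces $\min_t k_\Omega(\gamma_n(t),o)\to\infty$. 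For any $r=\lim_n\gamma_n(t_n)\in L$ the right-hand side tends to $\infty$, so one of the two summands diverges along a subsequence, placing $r\in G_p\cup G_q$.

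\textbf{Part (1) via IVT.} I would next apply the intermediate value theorem to the continuous function $\psi_n(t):=(z_n|\gamma_n(t))_o^\Omega-(\gamma_n(t)|w_n)_o^\Omega$. A direct computation combined with the triangle inequality gives
\[
\psi_n(0)=\tfrac12\bigl(k_\Omega(z_n,o)-k_\Omega(w_n,o)+k_\Omega(z_n,w_n)\bigr)\ge 0,\qquad \psi_n(1)=\tfrac12\bigl(k_\Omega(z_n,o)-k_\Omega(w_n,o)-k_\Omega(z_n,w_n)\bigr)\le 0,
\]
so there exists $t_n^*\in[0,1]$ with $\psi_n(t_n^*)=0$. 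At this balanced parameter both Gromov products are equal and hence each is at least $\tfrac12(k_\Omega(\gamma_n(t_n^*),o)-\epsilon/2)$ by the key estimate, which tends to infinity. Passing to a subsequence gives $r^*:=\lim_n\gamma_n(t_n^*)\in\partial\Omega$ with $r^*\in G_p\cap G_q$.

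\textbf{Main obstacle: forcing $r^*\notin\{p,q\}$.} The delicate remaining step is to exclude $r^*\in\{p,q\}$, since the Gromov-balance alone does not constrain the Euclidean location of $\gamma_n(t_n^*)$. The natural strategy is to impose an additional Euclidean separation: fix $\eta>0$ with $\overline{B(p,\eta)}\cap\overline{B(q,\eta)}=\emptyset$ and restrict the search for a zero of $\psi_n$ to the parameter set $T_n^\eta:=\{t\in[0,1]:\gamma_n(t)\notin B(p,\eta)\cup B(q,\eta)\}$; since sub-curves of $\epsilon$-geodesics are themselves $\epsilon$-geodesics (so the key estimate survives on subintervals), an IVT argument applied to $\psi_n$ on the component of $T_n^\eta$ separating the $p$- and $q$-ends should yield a balance point there, whose subsequential limit lies outside $\{p,q\}$. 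Alternatively, one may exploit the topological structure of $L$ from Proposition~\ref{prop:limitsetiscontinium}: $L\cap G_p$ and $L\cap G_q$ are closed, nonempty subsets covering the connected continuum $L$, so their intersection is nonempty, and combining with the uncountability of $L$ (a continuum of more than one point has the cardinality of the continuum) should prevent $L\cap G_p\cap G_q$ from being contained in $\{p,q\}$.
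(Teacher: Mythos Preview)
Your key estimate, your proof of part (2), and the IVT step producing the balanced point $\gamma_n(t_n^*)$ are all correct and essentially match the paper (which applies IVT to the ratio $(z_n|\gamma_n(t))_o^\Omega/(w_n|\gamma_n(t))_o^\Omega$ rather than your difference $\psi_n$; your choice is in fact cleaner, as it avoids any division issue). The gap is precisely the obstacle you flag, and neither of your two proposed resolutions works as written.

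In strategy (a) there is no reason for $\psi_n$ to change sign on the relevant component of $T_n^\eta$: the only sign information you have is $\psi_n(0)\ge 0$ and $\psi_n(1)\le 0$, and the sign change may occur entirely inside $\gamma_n^{-1}(B(p,\eta))$, leaving $\psi_n\le 0$ throughout $T_n^\eta$. The fact that sub-curves are again $\epsilon$-geodesics is correct but irrelevant here, since $\psi_n$ is defined with the \emph{original} endpoints $z_n,w_n$. In strategy (b) you have not shown that $L\cap G_p$ and $L\cap G_q$ are closed: $G_p$ is defined by a $\limsup$ and is not obviously closed, and your argument for (2) only produces, for each $r\in L$, a \emph{subsequence} along which one of the two Gromov products diverges. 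Even granting closedness, connectedness of $L$ gives only that the intersection is nonempty; the uncountability of $L$ says nothing about the cardinality or location of $L\cap G_p\cap G_q$, which could a priori be a single point in $\{p,q\}$.

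The paper closes the gap with a monotonicity of Gromov products along the $\epsilon$-geodesic: for $\tau\le\tau'\le 1$, the $\epsilon$-geodesic inequality gives $k_\Omega(\gamma_n(\tau),w_n)\ge k_\Omega(\gamma_n(\tau),\gamma_n(\tau'))+k_\Omega(\gamma_n(\tau'),w_n)-\epsilon$, and combining this with the triangle inequality yields
\[
(\gamma_n(\tau')\,|\,w_n)_o^\Omega\ \ge\ (\gamma_n(\tau)\,|\,w_n)_o^\Omega-\tfrac{\epsilon}{2}.
\]
Now if $r^*=\lim_n\gamma_n(t_n^*)=p$, pick $\tau_n'\in[t_n^*,1]$ with $\|\gamma_n(\tau_n')-\gamma_n(t_n^*)\|=\|\gamma_n(\tau_n')-w_n\|$ (possible by continuity); any subsequential limit $r'$ then satisfies $\|r'-p\|=\|r'-q\|$, hence $r'\notin\{p,q\}$. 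The displayed monotonicity gives $(\gamma_n(\tau_n')|w_n)_o^\Omega\ge(\gamma_n(t_n^*)|w_n)_o^\Omega-\epsilon/2\to\infty$, and the symmetric inequality (with $\gamma_n(t_n^*)$ playing the role of the left endpoint) gives $(\gamma_n(\tau_n')|\gamma_n(t_n^*))_o^\Omega\to\infty$; since $\gamma_n(t_n^*)\to p$, this yields $r'\in G_p\cap G_q$. The same monotonicity is what the paper uses for part (2) as well.
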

\kern-2em

\begin{proof} %Let $\gamma_n:[0,1]\rightarrow\O$ be as above, set $z_n:=\gamma_n(0)$ and $w_n:=\gamma_n(1)$. Fix $o\in\O$ and set $h_n:[0,1]\rightarrow \O$ to be $h_n(t):=(z_n|\gamma_n(t))^\O_o/(w_n|\gamma_n(t))^\O_o$.  Then by a direct calculation, we have $ h_n(0)= 2 k_\O(z_n,o)/(k_\O(z_n,o)+k_\O(w_n,o)-k_\O(z_n,w_n))$ and $h_n(1)= (k_\O(z_n,o)+k_\O(w_n,o)-k_\O(z_n,w_n))/2 k_\O(w_n,o)$ so by the triangle inequality $$h_n(0)\geq 1 \:\:\:\:\: \text{and} \:\:\:\:\: h_n(1)\leq 1.$$ 
	
%	Note that $\O$ is complete hyperbolic hence it is taut. This shows that $\kappa_\O$ is continuous on $\O\times \Cn$ and hence $k_\O$ is continuous on $\O$. In particular, this implies that each $h_n$ is a continuous function. An application of the mean value theorem to $h_n$ shows that there exists a $\tau_n\in [0,1]$ such that $h_n(\tau_n)=1$, that is $(z_n|\gamma_n(\tau_n))^\O_o=(w_n|\gamma_n(\tau_n))^\O_o$. Set $x_n:=\gamma_n(\tau_n)$. Then a direct calculation shows that $$2(z_n|x_n)^\O_o=2(w_n|x_n)^\O_o=(z_n|x_n)^\O_o+(w_n|x_n)^\O_o \geq (z_n|w_n)_o^\O + k_\O(x_n,o) \geq k_\O(x_n,o).$$ As $\O$ is complete hyperbolic  $k_\O(x_n,o)$ tends to infinity, so both $(z_n|x_n)^\O_o,(w_n|x_n)^\O_o$ tend to infinity. By passing to a subsequence, we observe that $\lim_{n\rightarrow\infty} x_n =: r \in G_p\cap G_q$. If $r\notin \{p,q\}$ we are done. 
	%	\begin{proof}
		Let $\gamma_n:[0,1]\rightarrow\O$ be as above, set $z_n:=\gamma_n(0)$ and $w_n:=\gamma_n(1)$. Fix $o\in\O$ and set $h_n:[0,1]\rightarrow \O$ to be $h_n(t):=2\left((z_n|\gamma_n(t))^\O_o - (w_n|\gamma_n(t))^\O_o\right)$.  Then by a direct calculation, we have $ h_n(0)= 2 k_\O(z_n,o)-(k_\O(z_n,o)+k_\O(w_n,o)-k_\O(z_n,w_n))$ and $h_n(1)= (k_\O(z_n,o)+k_\O(w_n,o)-k_\O(z_n,w_n))-2 k_\O(w_n,o)$ so by the triangle inequality $$h_n(0)=2(w_n|o)_{z_n}^\O \geq 0 \:\:\:\:\: \text{and} \:\:\:\:\: h_n(1)=-2(z_n|o)_{w_n}^\O\leq 0.$$ 
		
		Note that $\O$ is complete hyperbolic hence it is taut. This shows that $\kappa_\O$ is continuous on $\O\times \Cn$ and hence $k_\O$ is continuous on $\O$. In particular, this implies that each $h_n$ is a continuous function. An application of the intermediate value theorem to $h_n$ shows that there exists a $\tau_n\in [0,1]$ such that $h_n(\tau_n)=0$, that is $(z_n|\gamma_n(\tau_n))^\O_o=(w_n|\gamma_n(\tau_n))^\O_o$. Set $x_n:=\gamma_n(\tau_n)$. Then a direct calculation shows that $$2(z_n|x_n)^\O_o=2(w_n|x_n)^\O_o=(z_n|x_n)^\O_o+(w_n|x_n)^\O_o \geq (z_n|w_n)_o^\O + k_\O(x_n,o) -\epsilon/2 \geq k_\O(x_n,o) -\epsilon/2.$$ As $\O$ is complete hyperbolic  $k_\O(x_n,o)$ tends to infinity, so both $(z_n|x_n)^\O_o,(w_n|x_n)^\O_o$ tend to infinity. By passing to a subsequence, we observe that $\lim_{n\rightarrow\infty} x_n =: r \in G_p\cap G_q$. If $r\notin \{p,q\}$ we are done. 
%	\end{proof}
	
	If not, without loss of generality assume that $r=p$. Take $\tau'_n\in[\tau_n,1]$ such that $x'_n:=\gamma_n(\tau'_n)$ verifies $\|x'_n-w_n\|=\|x'_n-x_n\|$. Passing to a subsequence if necessary set $r'= \lim_{n\rightarrow\infty} x'_n$. Clearly, $r'\notin\{p,q\}$. As $x'_n\in\gamma_n([\tau_n,1])$ we have
	\begin{multline*}
		(x'_n|w_n)^\O_o - (x_n|w_n)^\O_o = \dfrac{1}{2}[k_\O(x'_n,o)-k_\O(x_n,o)+k_\O(x_n,w_n)-k_\O(x'_n,w_n)] \geq \\
		\dfrac{1}{2}[k_\O(x'_n,o)-k_\O(x_n,o)+\lk_\O(\gamma_n|_{[c_n,b_n]})-\lk_\O(\gamma_n|_{[c'_n,b_n]})-\epsilon] \geq \dfrac{1}{2}[k_\O(x'_n,o)-k_\O(x_n,o)+k_\O(x'_n,x_n)-\epsilon].
	\end{multline*}
	So by the triangle inequality we get
	$(x'_n|w_n)^\O_o \geq (x_n|w_n)^\O_o-\epsilon/2$. Similarly, $(x'_n|x_n)^\O_o \geq (x_n|w_n)^\O_o-\epsilon/2$. By construction $(x_n|w_n)^\O_o$ tends to infinity; hence so do $(x'_n|w_n)^\O_o$ and $(x'_n|x_n)^\O_o$. As $x_n\rightarrow p$, $x'_n\rightarrow r'$ and $w_n\rightarrow q$ we have $r'\in G_p \cap G_q$. We have the first statement.
	
	To see the second statement, by the first statement assume that there exists $\tau_n \in [0,1]$ with $(\gamma_n(\tau_n)|z_n)^\O_o$ and $(\gamma_n(\tau_n)|w_n)^\O_o$ tending to infinity. Suppose that $s\in L$, then there exists $\tau'_n\in [0,1]$ with $s_n:=\gamma_n(\tau'_n)$ tending to $s$. Switching $p$ and $q$ if needed, by passing to a subsequence we may assume that $\tau_n \leq \tau'_n$. As $s_n\in\gamma_n([\tau_n,1])$ similar calculations as above show that $(s_n|w_n)^\O_o\geq (\gamma_n(\tau_n)|w_n)^\O_o - \epsilon/2$. Thus, $(s_n|w_n)^\O_o$ tends to infinity hence $s\in G_q$. As $s\in L$ is arbitrary this finishes the proof. 
\end{proof}

Unfortunately, it is not clear whether in the above scenario we have that $q\in G_p$. 

%	Proof of Proposition \ref{prop:commongromovpartner} leads to the following two corollaries.

%	\begin{cor}\label{cor:limitsetiscontainedintheunion}
	%		Let $\O\subset\Cn$ be a complete hyperbolic domain, and suppose that $\{p,q\}\in\partial\O$ is not a weakly visible pair. Suppose that $\gamma_n:[0,1]\to\O$ is a sequence $\epsilon$-geodesics such that $\gamma_n(0),\gamma_n(1)$ tend to $p,q$ respectively. Let $L$ denote their limit set, namely let $L$ be as in \eqref{eqn:limitset}. Then $L\subset G_p\cup G_q$.
	%	\end{cor}

%	\begin{proof}
	%		Let $z_n:=\gamma_n(0),w_n:=\gamma_n(1)$ and suppose that $s\in L$. By passing to a subsequence if necessary there exists $c_n\in [0,1]$ with $s_n:=\gamma_n(c_n)$ tending to $s$. By the first statement, there exists $c'_n \in [0,1]$ with $(\gamma_n(c'_n)|z_n)^\O_o$ and $(\gamma_n(c'_n)|w_n)^\O_o$ tending to infinity. By passing to a subsequence we may assume that $c_n \leq c'_n$. A similar calculation as above shows that $(x_n|z_n)^\O_o\geq (\gamma_n(c'_n)|z_n)^\O_o - \epsilon$. Thus, $(x_n|z_n)^\O_o$ tends to infinity hence $s\in G_p$. As $s\in L$ is arbitrary this finishes the proof. 
	%	\end{proof}

The proof of Proposition \ref{prop:commongromovpartner} leads to the following corollary which improves the conclusion of \cite[Proposition 2.5]{BNT} and \cite[Proposition 3.1]{CMS}. 

\begin{cor}\label{cor:limitsetandgromovproduct}
	Let $\O$ be a complete hyperbolic domain and suppose that $p\in\partial\O$ is not a weakly visible point. Then, there exists a $\delta>0$ such that whenever $\delta'\leq \delta$, there exists $r\in G_p$ such that $\|p-r\|=\delta'$. 
\end{cor}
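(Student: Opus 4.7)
The plan is to produce, for every $\delta' \in (0, \delta]$ with $\delta > 0$ to be specified, a Gromov partner of $p$ at Euclidean distance $\delta'$ by slicing along an $\epsilon$-geodesic that witnesses the non-visibility of $p$. Since $p$ is not a weakly visible point, there exists $q \in \partial\O \setminus \{p\}$ such that $\{p,q\}$ is not a weakly visible pair, and after a standard extraction one obtains a sequence $\gamma_n : [0,1] \to \O$ of $\epsilon$-geodesics with $z_n := \gamma_n(0) \to p$, $w_n := \gamma_n(1) \to q$, and $\max_{t \in [0,1]} \delta_\O(\gamma_n(t)) \to 0$. This places us in the setting of Proposition~\ref{prop:commongromovpartner}. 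Reading off the proof of that proposition, part~(1) yields parameters $\tau_n \in [0,1]$ such that $x_n := \gamma_n(\tau_n)$ converges along a subsequence to some $r_0 \in G_p \cap G_q$ with $r_0 \in L \setminus \{p,q\}$ and both $(z_n|x_n)^\O_o$ and $(w_n|x_n)^\O_o$ diverging to $+\infty$. I would then set $\delta := \|r_0 - p\| > 0$.

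For $\delta' \in (0, \delta)$, the continuous function $t \mapsto \|\gamma_n(t) - p\|$ on $[0, \tau_n]$ takes a value tending to $0$ at $t = 0$ and a value tending to $\delta > \delta'$ at $t = \tau_n$, so for $n$ large enough the intermediate value theorem produces $s_n \in [0, \tau_n]$ with $y_n := \gamma_n(s_n)$ satisfying $\|y_n - p\| = \delta'$. Passing to a further subsequence, $y_n \to y$ with $\|y - p\| = \delta'$, and $y \in \partial\O$ since $\delta_\O(y_n) \leq \max_t \delta_\O(\gamma_n(t)) \to 0$.

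The main (if modest) remaining step is to verify that $y \in G_p$, which I would handle by recycling the Gromov product bookkeeping from the second half of the proof of Proposition~\ref{prop:commongromovpartner}, applied now to the ``left half'' $\gamma_n|_{[0, \tau_n]}$ rather than to the ``right half''. Combining the $\epsilon$-geodesic inequality $\lk_\O(\gamma_n|_{[0, \tau_n]}) \leq k_\O(z_n, x_n) + \epsilon$ with the obvious bound $\lk_\O(\gamma_n|_{[0, s_n]}) \geq k_\O(z_n, y_n)$ and additivity of $\lk_\O$ yields
\[
k_\O(z_n, x_n) - k_\O(z_n, y_n) \geq \lk_\O(\gamma_n|_{[s_n, \tau_n]}) - \epsilon \geq k_\O(y_n, x_n) - \epsilon.
\]
Expanding the difference of Gromov products and applying the triangle inequality $k_\O(y_n, o) + k_\O(y_n, x_n) \geq k_\O(x_n, o)$ then gives $(y_n|z_n)^\O_o \geq (x_n|z_n)^\O_o - \epsilon/2$, whence $(y_n|z_n)^\O_o \to +\infty$. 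Since $z_n \to p$ and $y_n \to y$, this places $y \in G_p$, with $\|y - p\| = \delta'$ as required. The boundary cases $\delta' = \delta$ (take $r = r_0$, which is already in $G_p$) and $\delta' = 0$ (take $r = p$) are immediate. The only conceptually delicate point I foresee is the initial reduction to a witnessing sequence with $\max_t \delta_\O(\gamma_n(t)) \to 0$, but this is standard.
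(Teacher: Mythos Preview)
Your argument is correct and essentially identical to the paper's: both extract from the proof of Proposition~\ref{prop:commongromovpartner} a point $x_n\in\gamma_n([0,1])$ tending to some $r\in G_p\setminus\{p\}$ with $(z_n|x_n)^\O_o\to\infty$, slice the sub-arc joining $z_n$ to $x_n$ at Euclidean distance $\delta'$, and then apply the $\epsilon$-geodesic bookkeeping to obtain $(y_n|z_n)^\O_o\geq(x_n|z_n)^\O_o-\epsilon/2$. The only cosmetic difference is that you impose $\|y_n-p\|=\delta'$ whereas the paper imposes $\|x'_n-z_n\|=\delta'$; since $z_n\to p$, both yield a limit point at distance exactly $\delta'$ from $p$.
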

\begin{proof}
	Suppose that $p$ is not a weakly visible point, then there exists $q\in\partial\O$ different from $p$ so that $\{p,q\}$ is not a weakly visible pair. Let $\gamma_n:[0,1]\rightarrow\O$ be a sequence of $\epsilon$-geodesics joining $\gamma_n(0)=:z_n$ to $\gamma_n(1)=:w_n$ such that $\lim_{n\rightarrow\infty}\max\{\delta_\Omega(\gamma_n(t)):t\in [0,1]\} = 0$ and  $z_n\to p$, $w_n\to q$. Let $L$ be as in \eqref{eqn:limitset}. By Proposition \ref{prop:commongromovpartner} there exists $r\in L$, different from $p$ such that $r\in G_p$. Then, there exists $x_n\in \gamma_n([0,1])$ tending to $r$ and $(z_n|x_n)_o^\O$ tending to infinity. Set $\delta:=\|p-r\|$ and let $\delta'\in (0,\delta)$ be arbitrary. By connectivity of geodesics, looking at the tail of the sequence if necessary we may choose $x'_n$ in the part of $\gamma_n(I_n)$ joining $z_n$ to $x_n$ with $\|z_n-x'_n\|=\delta'$. The proof of Proposition \ref{prop:commongromovpartner} shows that $(x'_n|z_n)_o^\O\geq (z_n|x_n)_o^\O -\epsilon/2$, thus $(x'_n|z_n)_o^\O$ tends to infinity. Looking at a subsequence set $\lim_{n\to \infty} x'_n=:r' \in \partial\O$. Then $\|r'-p\|=\delta'$ and $r'\in G_p$.  \end{proof}

\begin{rem}\label{rem:remarkabouttheargumentsworkforgeodesics}
	Taking $\epsilon=0$ and replacing the Kobayashi-Royden length with the Kobayashi length, the proof of Proposition \ref{prop:commongromovpartner} shows that it also remains true when we consider the case where $\{p,q\}$ is not a geodesic visible pair and we consider the limit set of Kobayashi geodesics tending to the boundary. Similarly, Corollary \ref{cor:limitsetandgromovproduct} remains true when $p$ is not a geodesic visible point.   
\end{rem}

\begin{rem}
	Observe that the proof of Corollary \ref{cor:limitsetandgromovproduct} shows that the set of Gromov partners of $p\in\partial\O$ can be realized as a limit set of curves tending to the boundary. In particular, by Proposition \ref{prop:limitsetiscontinium} we see that $G_p\subset\partial\O$ is a continuum.  
\end{rem}
\subsection{Applications of Proposition \ref{prop:commongromovpartner}}

The goal of this subsection is to give some applications of Proposition \ref{prop:commongromovpartner}. In particular, we present a local version of Proposition \ref{prop:growthbnt} and we provide a short proof of the main result of \cite{NOT} in the case of complete hyperbolicity. 

Let $\O$ be a domain in $\Cn$. Following \cite[Proposition 8]{NOT} we say that $p\in\partial\O$ is a \textit{visible (resp. weakly visible, resp. geodesic visible) point} if and only if for any $q\in\partial\O$ with $q\neq p$ we have that $\{p,q\}$ is a visible (resp. weakly visible, resp. geodesic visible) pair. It turns out that being a visible point is a local property independent of the domain itself, see \cite[Theorem 15]{NOT}.

With this definition, the following is a local version of Proposition \ref{prop:growthbnt}. The difference is that we make use of Proposition \ref{prop:commongromovpartner} instead of \cite[Proposition 2.4]{BNT}.

\begin{cor}\label{cor:characterizationofvisibility}
	Let $\O$ be a complete hyperbolic domain, $p\in\partial\O$. The following are equivalent. 
	\begin{enumerate}
		\item $p$ is not a geodesic visible point.
		\item $p$ is not a weakly visible point. 
		\item $G_p\neq \{p\}$.
		\item For any neighbourhood $U$ of $p$, $G_p\cap U\neq \{p\}$. 
	\end{enumerate}
\end{cor}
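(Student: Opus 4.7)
The plan is to prove the four conditions are equivalent via the cycle (1)$\Rightarrow$(2)$\Rightarrow$(4)$\Rightarrow$(3)$\Rightarrow$(1). The implication (1)$\Rightarrow$(2) is immediate: a Kobayashi geodesic is a $(1,0)$-geodesic, hence an $\epsilon$-geodesic for every $\epsilon\geq 0$, so any family of geodesics witnessing failure of geodesic visibility for some pair $\{p,q\}$ automatically witnesses failure of weak visibility at that pair. The implication (4)$\Rightarrow$(3) is trivial (take any neighborhood).

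For (2)$\Rightarrow$(4), suppose $p$ is not weakly visible. Then by the definition just recalled there is some $q\in\partial\Omega$, $q\neq p$, such that $\{p,q\}$ is not a weakly visible pair, so in particular $p$ itself is not a weakly visible point. Corollary \ref{cor:limitsetandgromovproduct} produces $\delta>0$ so that for every $\delta'\in(0,\delta]$ there exists $r\in G_p$ with $\|p-r\|=\delta'$. Consequently any neighborhood $U$ of $p$ meets $G_p\setminus\{p\}$, which is (4).

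The main step is (3)$\Rightarrow$(1). Assume there is $q\in G_p$ with $q\neq p$. By the definition of Gromov partner one can extract sequences $z_n\to p$, $w_n\to q$ with $(z_n|w_n)^\Omega_o\to\infty$. Since $\Omega$ is complete hyperbolic, the Hopf--Rinow theorem supplies Kobayashi geodesics $\sigma_n:[0,1]\to\Omega$ joining $z_n$ to $w_n$. On a Kobayashi geodesic every subsegment realizes the Kobayashi distance, so for any $t$ we have $k_\Omega(z_n,\sigma_n(t))+k_\Omega(\sigma_n(t),w_n)=k_\Omega(z_n,w_n)$; combining this with the triangle inequalities $k_\Omega(\sigma_n(t),o)\geq k_\Omega(z_n,o)-k_\Omega(z_n,\sigma_n(t))$ and $k_\Omega(\sigma_n(t),o)\geq k_\Omega(w_n,o)-k_\Omega(\sigma_n(t),w_n)$ and averaging yields
\[
k_\Omega(\sigma_n(t),o)\geq (z_n|w_n)^\Omega_o
\]
uniformly in $t\in[0,1]$. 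Since $(z_n|w_n)^\Omega_o\to\infty$ and $\Omega$ is complete hyperbolic (so Kobayashi balls are relatively compact), each $\sigma_n$ eventually leaves every compact $K\subset\Omega$. Hence $\{p,q\}$ is not a geodesic visible pair, and $p$ is not a geodesic visible point.

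The main difficulty, conceptually, lies in bridging weak visibility (stated in terms of $\epsilon$-geodesics, which can in principle be rather wild) and geodesic visibility. Proposition \ref{prop:commongromovpartner} does the heavy lifting by converting failure of weak visibility into the analytic information that some distinct boundary points have divergent Gromov product. Once this is available, the elementary estimate above on honest Kobayashi geodesics is enough to promote the information back to a failure of geodesic visibility, closing the cycle without further delicate analysis.
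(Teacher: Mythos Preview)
Your proof is correct and follows essentially the same ideas as the paper's, with a minor reorganization. The paper argues via the implications $(4)\Rightarrow(3)$, $(3)\Rightarrow(2)$, $(3)\Rightarrow(1)$, $(2)\Rightarrow(4)$, and $(1)\Rightarrow(4)$ (the last via Remark~\ref{rem:remarkabouttheargumentsworkforgeodesics}), whereas you close a single cycle $(1)\Rightarrow(2)\Rightarrow(4)\Rightarrow(3)\Rightarrow(1)$. Your $(3)\Rightarrow(1)$ is the direct form of the paper's contradiction argument in $(3)\Rightarrow(2)$/$(3)\Rightarrow(1)$: the inequality $k_\Omega(\sigma_n(t),o)\geq (z_n|w_n)^\Omega_o$ is exactly what the paper's computation with $x_n\in K$ yields when rearranged.

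The one genuine difference is your $(1)\Rightarrow(2)$: you assert that a Kobayashi geodesic is automatically a $(1,0)$-geodesic (in the Kobayashi--Royden length sense of Definition~1). This is true in a complete hyperbolic domain---tautness gives that metric geodesics are locally Lipschitz, hence absolutely continuous, with $\kappa_\Omega(\gamma(t);\gamma'(t))\leq 1$ a.e.\ when parametrized by $k_\Omega$-arclength---but it is a background fact not spelled out in the paper. The paper sidesteps this entirely by handling the geodesic case via Remark~\ref{rem:remarkabouttheargumentsworkforgeodesics}, which reruns Proposition~\ref{prop:commongromovpartner} with the Kobayashi length $l^k_\Omega$ in place of $l^\kappa_\Omega$. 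Your route is a bit more economical; the paper's avoids any appeal to regularity of Kobayashi geodesics.
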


\begin{proof} $\:$
	
	(4) $\implies$ (3) Clear.
	
	(3) $\implies$ (2) Let $q\in\partial\O$ be in $G_p$ with $q\neq p$. Then there exists $z_n,w_n\in\O$ tending to $p,q$ respectively with $(z_n|w_n)^\O_o$ tending to infinity. Let $\gamma_n:[0,1]\rightarrow\O$ be a sequence of $\epsilon$-geodesics joining $z_n$ to $w_n$. Suppose that there exists $K\subset\subset\O$ with $K\cap\gamma_n(I_n)\neq\emptyset$ for each $n$. Choose $x_n\in K\cap\gamma_n(I_n)$. Then 
	$$k_\O(z_n,o)+k_\O(w_n,o)  - 2c -\epsilon \leq k_\O(z_n,x_n)+k_\O(w_n,x_n) -\epsilon \leq \lk_\O(\gamma_n)-\epsilon \leq k_\O(z_n,w_n), $$ where $c:=\max_{z\in K} k_\O(z,o)$. In particular, this contradicts with the fact that $(z_n|w_n)^\O_o\rightarrow\infty$. So, we conclude that there doesn't exist such $K\subset\subset\O$. In particular, $\{p,q\}$ is not a pair with visible $\epsilon$-geodesics so $p$ is not a weakly visible point.
	
	(3) $\implies$ (1) Similar to (3) $\implies$ (2).
	
	(2) $\implies$ (4) The assertion follows from Corollary \ref{cor:limitsetandgromovproduct}.
	
	(1) $\implies$ (4) The assertion follows from Remark \ref{rem:remarkabouttheargumentsworkforgeodesics}.
\end{proof}

We now recover \cite[Theorem 15]{NOT} for weak visibility in the case of complete hyperbolicity. Precisely we will prove the following.

\begin{prop}\label{prop:anotherproof}
	Let $\O$ be a complete hyperbolic domain and suppose that $\O$ is hyperbolic at $p\in\partial\Omega$. The following are equivalent. \begin{enumerate}
		\item $p\in\partial\O$ is a weakly visible point.
		\item There exists a neighbourhood $U$ of $p$ such that $\O\cap U$ is complete hyperbolic and $p$ is a weakly visible point for $\O\cap U$.
		\item For any bounded neighbourhood $U$ of $p$ such that $\O\cap U$ is complete hyperbolic, $p$ is a visible point for $\O\cap U$.
	\end{enumerate}
\end{prop}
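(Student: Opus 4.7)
The plan is to reduce all three statements to the characterization of weak/geodesic visibility in terms of Gromov partners given by Corollary \ref{cor:characterizationofvisibility}, and then transfer between $\O$ and $\O\cap U$ using a localization estimate for the Kobayashi distance at a point of local hyperbolicity.

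First I would use Corollary \ref{cor:characterizationofvisibility}(4) to observe that being a weakly visible point is a purely local property: $p$ is weakly visible if and only if $G_p\cap U = \{p\}$ for one (equivalently, every) neighbourhood $U$ of $p$. So the equivalences (1)$\Leftrightarrow$(2)$\Leftrightarrow$(3) will follow as soon as I can match up Gromov partners of $p$ in $\O$ with those in $\O\cap U$ inside a sufficiently small neighbourhood of $p$.

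The key input is the standard localization estimate at a point of local hyperbolicity. Under the hypothesis that $\O$ is hyperbolic at $p$, for any bounded open neighbourhood $U$ of $p$ with $\O\cap U$ complete hyperbolic one can find a smaller neighbourhood $V\subset\subset U$ of $p$ and a constant $C>0$ such that
$$ k_\O(z,w)\leq k_{\O\cap U}(z,w)\leq k_\O(z,w)+C $$
for all $z,w\in\O\cap V$. The lower bound is monotonicity under inclusion; the upper bound is a classical Kobayashi localization statement (Royden, Fornaess--Sibony), which uses precisely the local positivity of $\kappa_\O$ near $p$ encoded by hyperbolicity at $p$. Fixing any base point $o'\in\O\cap V$, this immediately gives
$$ \bigl|(z|w)^\O_{o'}-(z|w)^{\O\cap U}_{o'}\bigr|\leq C \quad\text{for all } z,w\in\O\cap V, $$
so $G_p^\O$ and $G_p^{\O\cap U}$ coincide inside $V$ (the Gromov partner set being base-point independent).

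With this transfer in hand I would close the loop as follows. For (1)$\Rightarrow$(3), assuming $G_p^\O=\{p\}$ and given any bounded $U$ with $\O\cap U$ complete hyperbolic, the coincidence inside some $V$ yields $G_p^{\O\cap U}\cap V=\{p\}$, and Corollary \ref{cor:characterizationofvisibility}(4) applied to $\O\cap U$ upgrades this to full weak visibility of $p$ for $\O\cap U$. (3)$\Rightarrow$(2) is immediate, once one produces at least one admissible bounded $U$, which is exactly what hyperbolicity at $p$ supplies. (2)$\Rightarrow$(1) runs in the reverse direction using the same Gromov-product comparison.

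The main obstacle is the localization estimate itself: this is where the hyperbolicity-at-$p$ hypothesis is genuinely used, and it requires a local plurisubharmonic barrier at $p$ (or an equivalent Sibony-type construction) to bound the excess of $k_{\O\cap U}$ over $k_\O$. A secondary subtlety is the wording ``visible point'' in (3) versus ``weakly visible'' in (1)--(2); since Corollary \ref{cor:characterizationofvisibility} is formulated for weak and geodesic visibility, promoting to $(\lambda,\epsilon)$-visibility on the bounded complete hyperbolic local model $\O\cap U$ may require a brief additional remark, but this is routine once geodesic visibility on a bounded complete hyperbolic domain is in hand.
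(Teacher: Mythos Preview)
Your overall strategy coincides with the paper's: reduce to the Gromov-partner characterization (Corollary~\ref{cor:characterizationofvisibility} and Corollary~\ref{cor:limitsetandgromovproduct}) and transfer Gromov products between $\O$ and $\O\cap U$ via an additive localization of the Kobayashi distance on a small $V\subset\subset U$.

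The one genuine gap is in your justification of that localization. The two-sided additive bound
\[
k_{\O\cap U}(z,w)\le k_\O(z,w)+C\qquad (z,w\in\O\cap V)
\]
is \emph{not} a consequence of the classical Royden or Forn{\ae}ss--Sibony localization, and it does not follow from hyperbolicity at $p$ alone. Those classical results give a multiplicative comparison of the infinitesimal metrics near $p$; integrating along a curve yields only a multiplicative bound on lengths, and in any case gives no control over where an $\O$--almost-geodesic joining two points of $V$ travels. The additive two-point estimate is precisely the content of Sarkar's theorem (stated here as Lemma~\ref{lem:sarkar}, with the local refinement \cite[Theorem~9]{NOT}), and its proof uses visibility of $p$ --- for $\O$ or for $\O\cap U$ --- in an essential way, to force almost-geodesics between points of $V$ to remain inside $U$. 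This is exactly why the paper phrases Lemma~\ref{lem:sarkar} as ``Let $\O$ be as in one of the cases of Proposition~\ref{prop:anotherproof}'': whichever of (1), (2), (3) is currently being assumed supplies the visibility input needed to invoke Sarkar's argument. So the fix is to replace your Royden/Forn{\ae}ss--Sibony citation by Lemma~\ref{lem:sarkar} and to observe that the standing hypothesis in each implication already provides the required visibility; hyperbolicity at $p$ by itself is too weak for this step.

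Your closing remark about the word ``visible'' in item (3) versus ``weakly visible'' elsewhere is a fair observation: the paper's argument, like yours, only produces weak visibility for $\O\cap U$.
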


For the technical property of hyperbolicity at a boundary point $p$ we refer the reader to \cite[Definition 1]{NOT}. Note that bounded domains are hyperbolic at any boundary point. 

The key property here is the additive localization of the Kobayashi distance near visible points given in \cite[Theorem 1.3]{S}. Explicitly, we will make use of \cite[Theorem 9]{NOT} which is a local version of this result.

\begin{lem}\cite[Theorem 1.3]{S}\label{lem:sarkar}
	Let $\O$ be as in one of the cases of Proposition \ref{prop:anotherproof}. Then, for any bounded neighbourhood $U$ of $p$, there exists another neighbourhood $V\subset\subset U$ of $p$ and $C>0$ such that 
	$$	k_{\O\cap U}(z,w)\leq k_\O(z,w) + C \:\:\:\:\: \text{when} \: z,w\in \O\cap V, \: \text{and they are in the same component of} \: \: \O\cap U.
	$$
\end{lem}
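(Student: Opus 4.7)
The inequality $k_\O\leq k_{\O\cap U}$ is immediate from the distance-decreasing property of the Kobayashi pseudodistance, so the content of the lemma lies in the reverse bound up to an additive constant. The plan is to argue by contradiction: suppose no $V\subset\subset U$ and $C>0$ as in the conclusion exist. Then for each $n$ one can extract points $z_n,w_n\in\O$ converging to $p$, lying in the same connected component of $\O\cap U$, with
$$
k_{\O\cap U}(z_n,w_n)-k_\O(z_n,w_n)\to\infty.
$$
For each $n$, fix a $1$-geodesic $\gamma_n:[0,1]\to\O$ from $z_n$ to $w_n$; such curves exist because $k_\O$ is the inner distance associated with $\kappa_\O$.

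The strategy is to convert $\gamma_n$ into a competitor $\widetilde\gamma_n$ lying entirely in $\O\cap U$, whose $\lk_{\O\cap U}$-length exceeds $\lk_\O(\gamma_n)$ by only a uniform additive constant, which directly contradicts the divergence of the excess. Pick nested neighbourhoods $V'\subset\subset W\subset\subset U$ of $p$ and split $\gamma_n$ into the maximal arcs lying in $W$ and those exiting $W$. For the arcs staying in $W$, Royden's classical localisation of the Kobayashi-Royden pseudometric furnishes a constant $M>0$ with $\kappa_{\O\cap U}(z,v)\leq M\,\kappa_\O(z,v)$ on $W$, so these arcs yield admissible pieces of $\widetilde\gamma_n$ up to a multiplicative factor. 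For the arcs that exit $W$, visibility at $p$ is used in the following form: if an exit-arc reaches $\gamma_n(s_n)\to q\in\overline\O\setminus V'$, then weak visibility of the pair $\{p,q\}$ (when $q\in\partial\O$, using that $p$ is a weakly visible point) or sheer compactness (when $q\in\O$) forces the arc to meet a fixed compact set $K\subset\subset\O\cap U$. One then splices $\widetilde\gamma_n$ through $K$, picking up only the $\lk_{\O\cap U}$-diameter of $K$ as an additional constant.

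The main obstacle is replacing the multiplicative constant $M$ in Royden's bound by a purely additive one: the naive chain $k_{\O\cap U}\leq M\,k_\O(z_n,w_n)+C'$ is too weak when $k_\O(z_n,w_n)\to\infty$. This is overcome by the Forstneric-Rosay type refinement $\kappa_{\O\cap U}(z,v)\leq (1+\varepsilon(z))\,\kappa_\O(z,v)$ with $\varepsilon(z)\to 0$ as $z$ moves away from $\partial U$; so long portions of $\gamma_n$ deep inside $W$ contribute only an $o(\lk_\O(\gamma_n))$ excess, reducing the multiplicative factor to an additive one. A secondary technical point is to arrange that $\widetilde\gamma_n$ lies in the correct connected component of $\O\cap U$: this is secured by shrinking $V$ so that $\O\cap V$ lies in a single component of $\O\cap U$ together with $K$, which uses hyperbolicity at $p$ to control the local structure of $\O$ near $p$.
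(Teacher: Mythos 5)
A preliminary remark: the paper does not prove this statement — it is quoted from \cite[Theorem 1.3]{S} (and used through its local version \cite[Theorem 9]{NOT}) — so your argument can only be judged against what the statement requires, not against an internal proof. Your outer scaffolding is fine: monotonicity gives $k_\O\le k_{\O\cap U}$, the contradiction extraction of $z_n,w_n\to p$ with divergent excess is legitimate, $(1,1)$-geodesics exist because $k_\O$ is the inner distance of $\kappa_\O$, and you correctly identify that a multiplicative comparison of the metrics is not enough. But the two devices you use to get an \emph{additive} comparison are exactly where the argument fails. The refinement $\kappa_{\O\cap U}(z;v)\le(1+\varepsilon(z))\,\kappa_\O(z;v)$ is not available with $\varepsilon(z)$ small merely because $z$ is Euclidean-far from $\partial U$: the Royden/Forstneri\v{c}--Rosay localization gives $\kappa_{\O\cap U}(z;v)\le\coth\bigl(k_\O(z,\O\setminus U)\bigr)\,\kappa_\O(z;v)$, so $\varepsilon(z)\to0$ only where the Kobayashi distance from $z$ to $\O\setminus U$ blows up. That blow-up fails near boundary points admitting cheap escape routes (the bidisc near $(1,0)$, or any nontrivial analytic face: in the model $\DD\times\{\Re z_2>0\}$ one has $k(z,\O\setminus U)$ bounded as $z$ approaches the face), and your curves $\gamma_n$ pass near arbitrary boundary points inside $W$, where no visibility whatsoever is assumed — weak visibility is hypothesized only at the single point $p$. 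Remark \ref{rem:convexample2} of this very paper shows this is not a fictitious worry: there $p$ is a visible point while non-visible open faces $F_j$ accumulate at $p$, so every neighbourhood $W$ of $p$ contains regions where your $\varepsilon(z)$ stays bounded away from $0$. With only a uniform factor $M>1$ on the arcs inside $W$, the excess is proportional to $\lk_\O(\gamma_n)$ rather than $o(\lk_\O(\gamma_n))$, and the contradiction evaporates.

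The splicing step has a cost-accounting gap of the same nature. Weak visibility of a pair $\{p,q\}$ produces a compact set $K\subset\subset\O$ — not necessarily contained in $U$, and a priori depending on $q$, so a covering/uniformity argument over $q\in\ov W\setminus V'$ is needed — but, more seriously, replacing an excursion of $\gamma_n$ outside $W$ by a path through $K$ inside $\O\cap U$ requires joining the exit and re-entry points, which may be arbitrarily close to $\partial\O$, to $K$ \emph{inside} $\O\cap U$. That connection cost is unbounded and is precisely of the type $k_{\O\cap U}-k_\O$ that the lemma is meant to control; it is not ``only the $\lk_{\O\cap U}$-diameter of $K$''. Unless you can charge these connection costs against the $\lk_\O$-length of the excursion, or rule out excursions altogether, the competitor $\widetilde\gamma_n$ is not constructed. (The final claim that hyperbolicity at $p$ lets you place $\O\cap V$ and $K$ in one component of $\O\cap U$ is also unsubstantiated, though the statement of the lemma sidesteps this by restricting to points in the same component.) In short, the passage from the multiplicative localization of $\kappa$ to an additive localization of $k$ is the entire content of the cited theorem, and neither of your two mechanisms supplies it; for complete arguments see \cite{S} and \cite{NOT}.
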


\begin{proof}[Proof of Proposition \ref{prop:anotherproof}] By definition, we have that {(3)$\implies$(2)}. The proofs of {(2)$\implies$(1)} and {(1)$\implies$(3)} are similar.
	
	Let $U$ be a bounded neighbourhood of $p$ and choose $V\subset\subset U$ containing $p$ such that $U,V$ satisfy the localization result given in Lemma \ref{lem:sarkar}. Observe that if $z,w,o\in\O\cap V$ then Lemma \ref{lem:sarkar} implies that $$
	(z|w)^{\O\cap U}_o - C \leq (z|w)^{\O}_o \leq (z|w)^{\O\cap U}_o +\frac{C}{2}. $$
	
	Let $D$ be one of $\O,\O\cap U$ and $G$ be the other one. Suppose that $p$ is not a weakly visible point for $D$. Take $o\in\O\cap V$ and observe that Corollary \ref{cor:limitsetandgromovproduct} implies that there exist $q\in \partial\O\cap V$ with $q\neq p$ and sequences $z_n,w_n\in\O\cap V$ tending to $p,q$ respectively with $(z_n|w_n)_o^D$ tending to infinity. Then the inequality above implies that $(z_n|w_n)_o^G$ also tends to infinity hence by Corollary \ref{cor:characterizationofvisibility} we see that $p$ is not a weakly visible point for $G$. We have {(2)$\implies$(1)} and {(1)$\implies$(3)}. \end{proof}

\begin{rem}
	Following the arguments of the proof of \cite[Proposition 3.3]{CMS}, one may observe that we can substitute completeness of $\O$ and $\O\cap U$ with local completeness near $p$. Then, by the monotonicity of the Kobayashi distance we can remove the completeness assumption in the items of Proposition \ref{prop:anotherproof} by demanding that $\O$ is locally complete near $p$, that is there exists a neighbourhood $U$ of $p$ such that
	$\displaystyle{\liminf_{z \rightarrow \partial\O\cap U} k_\O(z,o) = \infty}$ for some (hence any) $o\in\O$. 
\end{rem}

\section{Necessary conditions for non-visibility on convex domains}

Recall that a domain $\O\subset\Cn$ is $\C$-convex if the intersection of $\O$ with any affine complex line is connected and simply connected.

We present the following estimates of Kobayashi distance and the Kobayashi-Royden pseudometric which will be used in the upcoming sections. Note that \eqref{eqn:classicallowerbound} below follows from the proof of \cite[Theorem 5.4]{Blo} in the convex case. 	

\begin{lem}\label{lem:classicallowerbound}\cite[Proposition 2]{NT}
	Let $\O$ be a proper convex (resp. $\C$-convex) domain then
	\begin{equation}\label{eqn:classicallowerbound}
		k_\O(z,w)\geq\left|\dfrac{1}{2}\log\left(\dfrac{\delta_\Omega(z)}{\delta_\Omega(w)}\right)\right|\:\:\:\left(\text{resp.}\:\:\:k_\O(z,w)\geq \left|\dfrac{1}{4}\log\left(\dfrac{\delta_\Omega(z)}{\delta_\Omega(w)}\right)\right|\right)\text{,} \:\:\:\:\: z,w\in\O
	\end{equation}
\end{lem}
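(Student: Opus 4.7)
The plan is to reduce the $n$-dimensional Kobayashi distance estimate to a one-dimensional one by projecting $\Omega$ via a suitable holomorphic map arising from the boundary geometry near a point closest to $z$, and comparing the Kobayashi distances. First I would assume without loss of generality that $\delta_\Omega(z)\leq\delta_\Omega(w)$ and fix $q\in\partial\Omega$ with $\|z-q\|=\delta_\Omega(z)$.

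For the convex case, I use the real supporting hyperplane $H$ at $q$ perpendicular to $q-z$. Setting $\nu := (q-z)/\|q-z\|$, the complex affine functional $L(x) := \langle x-q,\nu\rangle$ maps $\Omega$ into the half-plane $\Pi := \{\operatorname{Re}\zeta < 0\}$, with $|\operatorname{Re}L(z)| = \delta_\Omega(z)$ by construction. Since the perpendicular foot of $w$ onto $H$ lies outside $\Omega$, the segment from $w$ to that foot must cross $\partial\Omega$, yielding $|\operatorname{Re}L(w)| = \operatorname{dist}(w,H) \geq \delta_\Omega(w)$. The distance-decreasing property of holomorphic maps then gives $k_\Omega(z,w)\geq k_\Pi(L(z),L(w))$. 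Using $\kappa_\Pi(\zeta;\xi) = |\xi|/(2|\operatorname{Re}\zeta|) \geq |\operatorname{Re}\xi|/(2|\operatorname{Re}\zeta|)$ and integrating along any absolutely continuous curve in $\Pi$, one obtains $k_\Pi(\zeta_1,\zeta_2) \geq \tfrac12\bigl|\log(|\operatorname{Re}\zeta_1|/|\operatorname{Re}\zeta_2|)\bigr|$. Combining these produces $k_\Omega(z,w) \geq \tfrac12 \log(\delta_\Omega(w)/\delta_\Omega(z))$, which is the convex bound.

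The $\mathbb{C}$-convex case follows the same strategy, but with a weaker geometric input: only a \emph{complex} affine hyperplane $H'$ through $q$ is guaranteed to be disjoint from $\Omega$. Writing $H' = \{L'=0\}$ with $\|\nabla L'\| = 1$, the same geometric arguments give $|L'(z)| \leq \delta_\Omega(z)$ and $|L'(w)| \geq \delta_\Omega(w)$, but now $L'(\Omega) \subset \mathbb{C}^*$, which is not Kobayashi-hyperbolic, so one cannot finish by a direct projection to a hyperbolic 1-dimensional target. The workaround I would use is the infinitesimal Kobayashi--Royden lower bound $\kappa_\Omega(z;v) \geq 1/(4\delta_\Omega(z;v))$ valid on $\mathbb{C}$-convex domains (obtained by a Schwarz--Pick argument on the complex slice $\Omega\cap(z+\mathbb{C}v)$ together with the complex hyperplane disjoint from that slice), and integrate it along a minimizing path, using the $1$-Lipschitz property of $\delta_\Omega$ to relate $\delta_\Omega(\gamma(t);\gamma'(t))$ to $\delta_\Omega(\gamma(t))$. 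The factor $\tfrac14$ in place of $\tfrac12$ reflects the loss from not having a real supporting half-space, i.e.\ from the projection being onto a disc (via the complex line) rather than a half-plane.

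The main obstacle is precisely the $\mathbb{C}$-convex case: the natural target $\mathbb{C}^*$ is not hyperbolic, so one must work at the infinitesimal level rather than via a global projection, and carefully track the factor $\tfrac14$ through the integration. The convex case, by contrast, reduces transparently to the explicit Poincar\'e metric on the half-plane.
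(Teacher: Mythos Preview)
The paper does not supply its own proof of this lemma; it is quoted from \cite[Proposition~2]{NT}, with the remark that the convex case also follows from the proof of \cite[Theorem~5.4]{Blo}. Your convex argument---projecting via the real supporting hyperplane at a nearest boundary point onto a half-plane and using its explicit hyperbolic metric---is correct and is the standard one.

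Your $\mathbb C$-convex argument, however, has a genuine gap. You propose to integrate $\kappa_\Omega(z;v)\ge \tfrac14\,\delta_\Omega(z;v)^{-1}$ along a near-minimizing curve and then pass from $\delta_\Omega(\gamma;\gamma')$ to $\delta_\Omega(\gamma)$ via the $1$-Lipschitz property of $\delta_\Omega$. But for a unit vector $u$ one always has $\delta_\Omega(z;u)\ge\delta_\Omega(z)$, never the reverse, so the directional estimate only yields
\[
\kappa_\Omega\bigl(\gamma(t);\gamma'(t)\bigr)\ \ge\ \frac{\|\gamma'(t)\|}{4\,\delta_\Omega\bigl(\gamma(t);\gamma'(t)/\|\gamma'(t)\|\bigr)},
\]
which is \emph{weaker} than the bound $\|\gamma'(t)\|/\bigl(4\,\delta_\Omega(\gamma(t))\bigr)$ you would need in order to integrate up to $\tfrac14\bigl|\log\bigl(\delta_\Omega(w)/\delta_\Omega(z)\bigr)\bigr|$. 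In fact the pointwise inequality $\kappa_\Omega(z;v)\ge\|v\|/\bigl(4\,\delta_\Omega(z)\bigr)$ is false in general (take a convex slab and $v$ parallel to it), so no amount of Lipschitzness of $\delta_\Omega$ can rescue this route.

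The idea you are missing is that your projection $L'$ \emph{does} land in a hyperbolic one-dimensional target: linear projections of $\mathbb C$-convex domains are again $\mathbb C$-convex, so $L'(\Omega)\subsetneq\mathbb C$ is simply connected. Koebe's quarter theorem then gives $\kappa_{L'(\Omega)}(\zeta;1)\ge 1/\bigl(4\,\delta_{L'(\Omega)}(\zeta)\bigr)$, which you integrate exactly as in your half-plane computation; combine this with $\delta_{L'(\Omega)}(L'(z))\le|L'(z)|\le\delta_\Omega(z)$ (since $0\notin L'(\Omega)$) and $\delta_{L'(\Omega)}(L'(w))\ge\delta_\Omega(w)$ (since the $L'$-preimage of any boundary point of $L'(\Omega)$ is a complex hyperplane disjoint from $\Omega$). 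The constant $\tfrac14$ is thus Koebe's constant, not a penalty for ``projecting onto a disc rather than a half-plane''.
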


\begin{lem}\label{lem:infinitesimallemma}\cite[Proposition 1]{NPZ}
	Let $\O$ be a domain in $\Cn$ containing no affine complex lines then $$\kappa_\O(z;v) \leq \delta^{-1}_\O(z;v).$$
	If, in addition, $\O$ is a convex (resp. $\C$-convex) domain then \begin{equation}\label{eqn:Cconvexinfinitesimal} \k_\O(z;v) \geq \frac{1}{2} \delta^{-1}_\Omega(z;v) \:\:\:\:\: \left( \text{resp. $\k_\O(z;v) \geq \frac{1}{4} \delta^{-1}_\Omega(z;v)$}\right) , \:\:\:\:\: z\in\O, v\in \mathbb{C}^n.\end{equation}
\end{lem}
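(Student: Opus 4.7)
The plan is to establish the three inequalities by exhibiting suitable competitors for the extremal problem defining $\kappa_\O$.

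First, for the upper bound $\kappa_\O(z;v) \leq \delta_\O^{-1}(z;v)$, I would use the affine disc $\varphi_r(\zeta) := z + \zeta v$. Whenever $r < \delta_\O(z;v)$, the disc $\varphi_r(r\Delta)$ lies in the complex line $z+\C v$ and avoids $\partial\O$ by the definition of $\delta_\O(z;v)$, so $\varphi_r$ is a holomorphic map $r\Delta\to\O$ with $\varphi_r(0)=z$ and $\varphi_r'(0)=v$, yielding $\kappa_\O(z;v)\leq 1/r$; letting $r\to\delta_\O(z;v)^-$ proves the claim (trivially when $\delta_\O(z;v)=\infty$).

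For the convex lower bound, the hypothesis that $\O$ contains no affine complex line produces $\alpha_0\in\C$ with $|\alpha_0|=\delta_\O(z;v)$ and $p:=z+\alpha_0 v\in\partial\O$. I would invoke Hahn-Banach to obtain a real affine functional supporting $\O$ at $p$; since every such real affine functional on $\Cn$ admits an expression of the form $\Re(\ell(\cdot)-\ell(p))$ for some complex linear $\ell:\Cn\to\C$, and negating $\ell$ if necessary, the map $\tilde\ell(w):=\ell(w)-\ell(p)$ sends $\O$ holomorphically into the left half-plane $\Pi:=\{\zeta\in\C:\Re\zeta<0\}$. Combining the contraction property of $\kappa$ with the explicit formula $\kappa_\Pi(\zeta;u)=|u|/(2|\Re\zeta|)$ then gives
$$\kappa_\O(z;v)\geq\kappa_\Pi\bigl(\tilde\ell(z);\ell(v)\bigr)=\frac{|\ell(v)|}{2|\Re(\alpha_0\ell(v))|}\geq\frac{|\ell(v)|}{2|\alpha_0||\ell(v)|}=\frac{1}{2\delta_\O(z;v)},$$
where I used $\tilde\ell(z)=-\alpha_0\ell(v)$ and $|\Re(\cdot)|\leq|\cdot|$.

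For the $\C$-convex lower bound I would replace the supporting real half-space by a complex supporting hyperplane, using the fact that $\C$-convex domains are linearly convex: at $p$ there exists a complex linear $\ell$ with $\ell(w)\neq\ell(p)$ for all $w\in\O$. Necessarily $\ell(v)\neq 0$, otherwise $\ell$ would be constant on $z+\C v$, forcing $\ell(z)=\ell(p)$ and contradicting $z\in\O$. I would then set $\pi:=\ell-\ell(p):\O\to\C^\ast$ and use (i) the preservation of $\C$-convexity under complex linear images and (ii) the equivalence of $\C$-convexity with simple connectivity in dimension one to conclude that $\pi(\O)$ is a simply connected proper subdomain of $\C$. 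A Riemann map $\psi:\pi(\O)\to\Delta$ with $\psi(\pi(z))=0$, combined with the Koebe quarter theorem applied to $\psi^{-1}$, then gives $|\psi'(\pi(z))|\geq 1/(4\,\mathrm{dist}(\pi(z),\C\setminus\pi(\O)))\geq 1/(4|\pi(z)|)=1/(4\delta_\O(z;v)|\ell(v)|)$, since $\pi(p)=0\notin\pi(\O)$. Contracting $\psi\circ\pi:\O\to\Delta$ finally delivers $\kappa_\O(z;v)\geq|\psi'(\pi(z))|\cdot|\ell(v)|\geq 1/(4\delta_\O(z;v))$. The main obstacle lies exactly in the inputs (i)--(ii) from the theory of $\C$-convex domains, which are classical (Andersson-Passare-Sigurdsson, H\"ormander) but non-trivial; modulo these, the argument reduces to routine Kobayashi-Royden computations on the half-plane and the disc.
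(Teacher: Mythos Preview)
The paper does not give its own proof of this lemma: it simply quotes \cite[Proposition~1]{NPZ}. Your argument is correct and is precisely the standard proof that one finds in the literature (including in \cite{NPZ}): affine discs for the upper bound, a supporting real hyperplane reinterpreted as a complex linear functional into a half-plane for the convex lower bound, and for the $\C$-convex case a supporting complex hyperplane followed by the Koebe one-quarter theorem on the simply connected planar image. The only inputs you flag as non-elementary, namely that a $\C$-convex domain is weakly linearly convex and that linear projections preserve $\C$-convexity (hence the image under a non-constant linear functional is simply connected in $\C$), are exactly the facts invoked in \cite{NPZ}. Two tiny remarks: in the upper bound you should make explicit that $\varphi_r(r\Delta)$, being connected, containing $z\in\O$, and avoiding $\partial\O$, must lie in $\O$; and in the convex case the non-vanishing $\ell(v)\neq 0$ follows, as you implicitly use, from $\Re\tilde\ell(z)<0$ together with $\tilde\ell(z)=-\alpha_0\ell(v)$.
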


Recall the notion of local Goldilocks points introduced in \cite[Definition 1.3]{BZ2} following the notion of Goldilocks domains given in \cite[Definition 1.1]{BZ}.

\begin{defn}\label{defn:goldilockspoints}
	Let $\O$ be a domain in $\Cn$ and $p\in\partial\O$. $p$ is said to be a \textit{local Goldilocks point} if there exists a neighbourhood $U$ of $p$ such that
	\begin{enumerate}
		\item $\displaystyle{\int_{0}^{\epsilon}} \frac{M_{\O,U}(r)dr}{r} < \infty$ for some (hence any) $\epsilon>0$, where $$M_{\O,U}(r):=\sup\{ \kappa^{-1}_\O(z;v): z\in\O\cap U \:\:\:\text{with}\:\:\: \delta_\O(z)\leq r, v\in\Cn  \:\:\:\text{with}\:\:\: \|v\|=1 \}.$$
		\item For some (hence any) $z_0\in\O$, there exists $\alpha,C>0$ with $k_\O(z,z_0)\leq C + \alpha|\log(\delta_\O(z))|$.
	\end{enumerate}
\end{defn} 

Note that as the boundaries of convex domains are Lipschitz they satisfy an interior cone condition (see \cite[Definition 2.2]{BZ}). Hence by \cite[Lemma 2.2]{BZ} the second item in the Definition \ref{defn:goldilockspoints} is satisfied for convex domains.

Let $\partial_{lg}\O$ denote the set of local Goldilocks points in $\partial\O$. We say that $p\in\partial\O$ is a \textit{non-Goldilocks point} if $p\notin \partial_{lg}\O$. Recall the following result.

\begin{res}\cite[Theorem 1.4]{BZ2}
	Let $\O$ be a hyperbolic domain such that $\partial\O\setminus\partial_{lg}\O$ is totally disconnected. Then, $\O$ satisfies the visibility property.  
\end{res}

\begin{rem}\label{rem:remarkaboutlimitsetsarenongoldilocks}
	It follows from the proof of the result above that local Goldilocks points cannot be in the limit set of $(\lambda,\epsilon)$-geodesics that tend to the boundary for any $\lambda\geq 1$ and $\epsilon\geq 0$. In particular, by Proposition \ref{prop:limitsetiscontinium} we observe that a domain which fails the visibility property must have a boundary that contains a continuum of non-Goldilocks points.
\end{rem}

The goal of this section is to give a detailed version of Remark \ref{rem:remarkaboutlimitsetsarenongoldilocks} in the case of convexity. 

Let $\Omega$ be a domain in $\mathbb{C}^n$ with $\mathcal C^1$-smooth boundary and $T_p^\mathbb{C}\partial \Omega$ denote the \emph{affine} complex tangent space to $\partial \Omega$ at $p\in\partial \Omega$. Explicitly, $T^\mathbb{C}_p\partial\Omega=p+(F^\mathbb{R}_p\partial\Omega \cap i F^\mathbb{R}_p\partial\Omega)$, where $F^\mathbb{R}_p\partial\Omega$ is the real tangent space to $\partial \Omega$ at $p\in\partial \Omega$. With this terminology, the following result is going to be essential. 

\begin{lem}\cite[Theorem 4.1]{Z}\label{lem:zimmer}
	Let $\O\subset\Cn$ be a convex domain with $\mathcal{C}^{1,\alpha}$-boundary. Suppose that there exists $z_n\rightarrow p\in \partial \O$, $w_n\rightarrow q\in\partial\O$ with $(z_n|w_n)^\O_o\rightarrow \infty.$
	Then $T^\mathbb{C}_p\partial\O=T^\mathbb{C}_q\partial\O.$
\end{lem}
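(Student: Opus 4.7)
The plan is to argue by contrapositive: assuming $T^\mathbb{C}_p \partial\Omega \neq T^\mathbb{C}_q \partial\Omega$ as affine complex hyperplanes, I would show that $(z_n|w_n)^\Omega_o$ must remain bounded, contradicting the hypothesis. The approach rests on obtaining matching-order lower and upper bounds on $k_\Omega(z_n,w_n)$ and on $k_\Omega(z_n,o),\, k_\Omega(w_n,o)$ in terms of $|\log\delta_\Omega|$.

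For the upper bound on distances to the base point, I would integrate the estimate $\kappa_\Omega(z;v) \leq \delta^{-1}_\Omega(z;v)$ of Lemma \ref{lem:infinitesimallemma} along a suitable approach curve to $p$, and refine this using convexity and the $\mathcal{C}^{1,\alpha}$-regularity of $\partial\Omega$ to obtain $k_\Omega(z,o) \leq \tfrac{1}{2}|\log\delta_\Omega(z)| + C$ (the sharp constant $\tfrac{1}{2}$ is crucial for the argument and comes from comparing $\Omega$ with its supporting complex half-space at the nearest boundary point, in the spirit of classical Forstneri\v{c}--Rosay estimates).

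For the lower bound I would exploit the assumption $T^\mathbb{C}_p \partial\Omega \neq T^\mathbb{C}_q \partial\Omega$ to produce a complex linear functional $\ell:\mathbb{C}^n \to \mathbb{C}$ with $\operatorname{Im}\ell > 0$ on $\Omega$, $\ell(p),\,\ell(q) \in \mathbb{R}$, and $\ell(p) \neq \ell(q)$. Real supporting hyperplanes at $p$ and $q$ exist by convexity; the distinctness of the complex tangent hyperplanes, combined with $\mathcal{C}^{1,\alpha}$-regularity, allows me to produce such $\ell$ either by complex-linearly extending a common real supporting functional or by a separation argument applied to $p,q$ and $\Omega$. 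Then holomorphic contraction under $\ell:\Omega \to \mathbb{H} = \{\operatorname{Im}\zeta > 0\}$ yields
\[
k_\Omega(z_n,w_n) \geq k_\mathbb{H}\bigl(\ell(z_n), \ell(w_n)\bigr).
\]
Using $\operatorname{Im}\ell(z_n) \asymp \delta_\Omega(z_n)$, $\operatorname{Im}\ell(w_n) \asymp \delta_\Omega(w_n)$ (from $\mathcal{C}^{1,\alpha}$-smoothness), together with $|\ell(z_n)-\ell(w_n)| \to |\ell(p)-\ell(q)| > 0$, the explicit formula for the Poincar\'e distance on $\mathbb{H}$ gives
\[
k_\mathbb{H}\bigl(\ell(z_n), \ell(w_n)\bigr) = \tfrac{1}{2}|\log\delta_\Omega(z_n)| + \tfrac{1}{2}|\log\delta_\Omega(w_n)| + O(1).
\]
Combining these estimates would force $(z_n|w_n)_o^\Omega = \tfrac{1}{2}\bigl(k_\Omega(z_n,o)+k_\Omega(w_n,o) - k_\Omega(z_n,w_n)\bigr)$ to be uniformly bounded, which is the desired contradiction.

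The principal obstacle is the construction of a \emph{single} scalar-valued holomorphic functional whose contraction encodes the complex tangent-plane asymmetry additively in the two endpoints: using two separate projections supporting $\Omega$ at $p$ and at $q$ respectively would produce only $k_\Omega(z_n,w_n) \geq \max(\ldots)$, which is insufficient to overcome the logarithmic upper bound on distances to $o$. A secondary difficulty is obtaining the sharp upper-bound constant $\tfrac{1}{2}$ in the estimate on $k_\Omega(z,o)$, which genuinely requires the $\mathcal{C}^{1,\alpha}$-regularity of the boundary and is the reason this smoothness hypothesis appears in the statement.
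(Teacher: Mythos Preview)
The paper does not prove this lemma; it is quoted from Zimmer \cite[Theorem 4.1]{Z} and used as a black box, so there is no proof in the paper itself to compare against.

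Your proposed approach has a fatal gap, and it is exactly the one you flag as the ``principal obstacle''. The functional $\ell$ you seek cannot exist under your contrapositive hypothesis. The conditions $\operatorname{Im}\ell>0$ on $\Omega$ and $\ell(p),\ell(q)\in\mathbb{R}$ say that the real hyperplane $\{\operatorname{Im}\ell=0\}$ supports $\Omega$ and passes through both $p$ and $q$. But $\partial\Omega$ is $\mathcal{C}^{1}$, so the supporting hyperplane at any boundary point is unique and equals the real tangent space there. Hence $T^{\mathbb R}_p\partial\Omega=\{\operatorname{Im}\ell=0\}=T^{\mathbb R}_q\partial\Omega$, and since the complex tangent space is the maximal complex subspace of the real tangent space, this forces $T^{\mathbb C}_p\partial\Omega=T^{\mathbb C}_q\partial\Omega$, precisely what you are assuming fails. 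Concretely, on the unit ball in $\mathbb{C}^2$ with $p=(1,0)$ and $q=(0,1)$ no such $\ell$ exists. Neither of your two suggested routes (extending a common real supporting functional, or a separation argument) can get around this, because the obstruction is intrinsic to the $\mathcal{C}^1$ convex setting.

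Your diagnosis that two separate supporting functionals $\ell_p,\ell_q$ yield only $k_\Omega(z_n,w_n)\ge\max\bigl(\tfrac12|\log\delta_\Omega(z_n)|,\tfrac12|\log\delta_\Omega(w_n)|\bigr)+O(1)$, which is half of what is needed, is correct; indeed under the contrapositive hypothesis one always has $q\notin T^{\mathbb R}_p\partial\Omega$, so $\ell_p(w_n)$ converges to an interior point of $\mathbb{H}$ and only the $z_n$-contribution survives. So neither branch of your plan closes. Zimmer's actual argument in \cite{Z} is substantially different and does not proceed via a single scalar holomorphic contraction; the $\mathcal{C}^{1,\alpha}$ hypothesis enters through finer control of the boundary geometry than supporting-hyperplane projections can see.
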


%Unfortunately, it is not clear wheter we can replace the complex tangent hyperplane in the above proposition with a complex line. 

Let $\O$ be a convex domain, $p\in\partial \O$. A face of $\O$ is defined to be $\partial\O\cap H$ where $H$ is a complex hyperplane not intersecting $\O$. \textit{The multiface of $\O$ at $p$}, denoted by $F_p$, is defined as the union of all faces of $\O$ containing $p$. The latter condition in Lemma \ref{lem:zimmer} can be read as follows. 

\begin{rem}
	\label{rem:geometriccond}\cite[Remark 1.5]{Z}
	Let $\O\subset\Cn$ be a convex domain with $\mathcal{C}^1$-boundary, where  $n\geq 2$. Then the following are equivalent. 
	\begin{enumerate}
		\item $T^\mathbb{C}_p \partial \O \neq T^\mathbb{C}_q \partial \O $.
		\item The complex line containing $p,q$ intersects $\O$.
		\item $p$ and $q$ lie on different multifaces of $\O$. 
	\end{enumerate}
\end{rem}

Let us present the main result of this section, which shows that under some boundary regularity the limit set of $\epsilon$-geodesics or Kobayashi geodesics (see Remark \ref{rem:remarkaboutgeometricconditionsextendtogeodesicnonvisiblecase}) tending to the boundary lies on the same complex multiface of convex domains. 

\begin{thm}\label{thm:convexface}
	Let $\O$ be a convex domain in $\Cn$ containing no affine complex lines with $\mathcal C ^{1,\alpha}$-boundary, where $n\geq 2$. Suppose that $\O$ fails the weak visibility property and let $\gamma_n:[0,1]\to\O$ be a sequence of $\epsilon$-geodesics such that $\max_{t\in[0,1]} \delta_\O(\gamma_n(t))\to 0$, $\gamma_n(0)\to p\in \partial \O$ and  $\gamma_n(1)\to q\in \partial\O$ with $q\neq p$. Set $L$ to be as in \eqref{eqn:limitset}. Then $L\subset T^\mathbb{C}_p \partial \O =  T^\mathbb{C}_q \partial \O.$
\end{thm}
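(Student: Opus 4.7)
The plan is to combine Proposition \ref{prop:commongromovpartner} with Lemma \ref{lem:zimmer}. The first provides points in $L$ that are Gromov partners of both $p$ and $q$, and the second converts the Gromov partner relation into equality of complex tangent hyperplanes. Because at any $\mathcal{C}^1$ boundary point of a convex domain the complex tangent hyperplane is an affine supporting hyperplane passing through that point, this will suffice to locate $L$ inside one such hyperplane.

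First I would establish the equality $T^\mathbb{C}_p\partial\O = T^\mathbb{C}_q\partial\O$. Applying part (1) of Proposition \ref{prop:commongromovpartner} produces a point $r \in L\setminus\{p,q\}$ lying in $G_p \cap G_q$. The membership $r \in G_p$ yields sequences $z_k \to p$ and $w_k \to r$ (both in $\O$) with $(z_k|w_k)^\O_o \to \infty$, and Lemma \ref{lem:zimmer} then gives $T^\mathbb{C}_p\partial\O = T^\mathbb{C}_r\partial\O$. The symmetric argument, using instead $r \in G_q$, produces $T^\mathbb{C}_q\partial\O = T^\mathbb{C}_r\partial\O$. Hence the two complex tangent hyperplanes at $p$ and $q$ coincide; I will denote this common affine complex hyperplane by $H$.

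For the containment $L \subset H$, I would take an arbitrary $s \in L$. By part (2) of Proposition \ref{prop:commongromovpartner}, $s \in G_p \cup G_q$, and running exactly the same application of Lemma \ref{lem:zimmer} as above yields $T^\mathbb{C}_s\partial\O = H$ in either case. Since $T^\mathbb{C}_s\partial\O$ is a complex supporting hyperplane of the convex domain $\O$ that passes through $s$, the point $s$ lies in $H$, which is what we wanted.

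I do not expect any genuine obstacle here: Proposition \ref{prop:commongromovpartner} has already absorbed the analytical work of extracting common Gromov partners from a sequence of $\epsilon$-geodesics breaking visibility, while Lemma \ref{lem:zimmer} encapsulates the $\mathcal{C}^{1,\alpha}$ rigidity of complex tangent hyperplanes under Gromov divergence. The only additional geometric input needed is the elementary observation that $s \in T^\mathbb{C}_s\partial\O$ for $s \in \partial\O$. In view of Remark \ref{rem:remarkabouttheargumentsworkforgeodesics}, exactly the same argument extends verbatim to limit sets of genuine Kobayashi geodesics violating geodesic visibility, so no separate treatment of that case is required.
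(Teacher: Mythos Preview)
Your proposal is correct and follows essentially the same route as the paper's own proof: apply part (1) of Proposition \ref{prop:commongromovpartner} to obtain $r\in G_p\cap G_q$, use Lemma \ref{lem:zimmer} twice to deduce $T^\mathbb{C}_p\partial\O = T^\mathbb{C}_r\partial\O = T^\mathbb{C}_q\partial\O$, and then invoke part (2) together with Lemma \ref{lem:zimmer} for an arbitrary $s\in L$. The paper's version leaves implicit the elementary fact $s\in T^\mathbb{C}_s\partial\O$ that you spell out, and it prefaces the argument by noting that $\O$ is complete hyperbolic (automatic here since $\O$ is convex and contains no affine complex lines), but otherwise the two arguments are identical.
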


\begin{proof}
	
	As $\O$ is complete hyperbolic, by Proposition \ref{prop:commongromovpartner} there exists $r\in \partial \O\setminus\{p,q\}$ with $r\in G_p\cap G_q$. Then by Lemma \ref{lem:zimmer} we have that $
	T^\mathbb{C}_p \partial \O =  T^\mathbb{C}_r \partial \O =  T^\mathbb{C}_q \partial \O$. It again follows from Proposition \ref{prop:commongromovpartner} that any $s\in L$ is in $G_p\cup G_q$, hence Lemma \ref{lem:zimmer} shows that $ T_s^\mathbb{C} \partial \O = T_p^\mathbb{C} \partial \O = T_q^\mathbb{C} \partial \O $. 
\end{proof}

Unfortunately, if $\lambda>1$ we do not know whether the same assertion holds for limit sets of $(\lambda,\epsilon)$-geodesics tending to the boundary.

For domains in $\mathbb{C}^2$, Theorem \ref{thm:convexface} provides even more detail.

\begin{cor}\label{cor:nonvisibilityinc2}
	Let $\O$ be a convex domain in $\C^2$ containing no affine complex lines with $\mathcal{C}^{1,\alpha}$-boundary. Suppose that $\O$ fails the weak visibility property. Then there exists a complex tangential line segment $S\subset\partial\O$ of non-Goldilocks points.
\end{cor}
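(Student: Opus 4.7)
The plan is to extract a candidate complex tangential segment from Theorem \ref{thm:convexface} using convexity, and then verify its points are non-Goldilocks, splitting on the dimension of the relevant boundary face inside the complex tangent plane.

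I would start by applying Theorem \ref{thm:convexface} to an $\epsilon$-geodesic sequence witnessing failure of weak visibility, producing distinct $p, q \in \partial\O$ and a limit set $L \subset T^\mathbb{C}_p\partial\O = T^\mathbb{C}_q\partial\O$. In $\C^2$, $H := T^\mathbb{C}_p\partial\O$ is a complex affine line (two real dimensions) sitting inside the real supporting hyperplane $T_p\partial\O$, hence $H \cap \O = \emptyset$. Thus $K := H \cap \ov\O = H \cap \partial\O$ is convex, contains $p, q \in L$, and therefore contains $[p, q]$. For any $s \in [p, q]$, the complex line through $s$ in direction $q-p$ coincides with $H$ and does not meet $\O$, so Remark \ref{rem:geometriccond} gives $T^\mathbb{C}_s\partial\O = H$; consequently any real segment contained in $K$ is complex tangential at each of its points.

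If the relative interior of $K$ inside $H$ is empty, then $K$ is a real line segment, the connected set $L$ lies in it and contains both $p$ and $q$, so $L \supseteq [p, q]$; Remark \ref{rem:remarkaboutlimitsetsarenongoldilocks} then identifies $[p, q]$ as the desired segment of non-Goldilocks points. If instead the relative interior of $K$ is non-empty, I would choose $s$ in it and pick coordinates centered at $s$ with $H = \C \times \{0\}$, writing $\O$ locally as $\{\Im z_2 > \rho(z_1, \Re z_2)\}$; the assumption on $s$ translates to $\rho(z_1, 0) \equiv 0$ on an open disk $N \subset \C$ around $0$. Fix $r_0 > 0$ with $r_0 \ov{\D} \subset N$. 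For small $r > 0$, set $z_r := (0, ir) \in \O$ (so $\delta_\O(z_r) = r$) and $\phi_r(\lambda) := (r_0 \lambda, ir)$; this is a holomorphic map $\D \to \O$ with $\phi_r(0) = z_r$ and $\phi'_r(0) = r_0 v$, where $v := (1, 0)$. After reparametrization this gives $\kappa_\O(z_r; v) \leq 1/r_0$, hence $M_{\O, U}(r) \geq r_0$ uniformly in small $r$ for any small neighborhood $U$ of $s$, which makes the Goldilocks integral diverge. The same argument applies to every interior point of $K$ near $s$, so any short line segment in the relative interior of $K$ is complex tangential and consists of non-Goldilocks points.

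The main technical point is the holomorphic disk construction in the two-dimensional case, which relies essentially on $\rho$ vanishing on a genuine two-real-dimensional flat piece, allowing $r_0$ to be chosen uniformly in $r$. A purely one-real-dimensional flat segment (as in the case when $K$ itself is one-dimensional) would only permit $r_0 = O(r^{1/(1+\alpha)})$ via the $\mathcal{C}^{1,\alpha}$ control on $\rho$ off the real flat direction, which is too weak for the Goldilocks integral to diverge; that is why the one-dimensional case must instead rely on Remark \ref{rem:remarkaboutlimitsetsarenongoldilocks}.
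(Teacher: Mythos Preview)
Your proof is correct and follows essentially the same strategy as the paper: apply Theorem~\ref{thm:convexface} to force the limit set $L$ into the complex tangent line $H$, then split into the case where $H\cap\partial\O$ is one--real--dimensional (where connectedness of $L$ forces $[p,q]\subset L$ and Remark~\ref{rem:remarkaboutlimitsetsarenongoldilocks} finishes) versus the case where it has nonempty interior in $H$ (where an analytic disc of fixed radius witnesses the non-Goldilocks condition at every interior point). The paper's only cosmetic differences are that it splits on whether $L$ itself contains a segment rather than on the dimension of $K=H\cap\partial\O$, and in the two--dimensional case it invokes Lemma~\ref{lem:infinitesimallemma} in place of your explicit disc $\phi_r$---which is exactly the same estimate unpacked.
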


\begin{proof}
	Suppose that $\O$ is not weakly visible, let $\gamma_n:[0,1]\to\O$  be a sequence of $\epsilon$-geodesics with $\max_{t\in[0,1]}\delta_\O(\gamma_n(t))\to 0$, $z_n:=\gamma_n(0)\to p$, $w_n:=\gamma_n(1)\to q$ with $q\neq p$. Let $L$ be as in \eqref{eqn:limitset}. Then it follows by Theorem \ref{thm:convexface} that $L\subset F_p$. By Remark \ref{rem:remarkaboutlimitsetsarenongoldilocks} $L$ contains only non-Goldilocks points. If $L$ contains a line segment we are done. 
	
	Suppose that $L$ does not contain a line segment. Then, we have another point $r$ in $L$ such that $p,q$ and $r$ are not collinear. Consider the convex hull of $p,q,r$ denoted by $C$. As $\O$ is convex, by Remark \ref{rem:geometriccond} we have that $C\subset\ov\O\cap T^\mathbb{C}_p\partial\O$. This shows that $\partial \O\cap T_p^\mathbb{C}\partial \O$ contains a non-empty open set in the topology of $T_p^\mathbb{C}\partial \O$. It is then easy to see that Lemma \ref{lem:infinitesimallemma} implies that interior points of $C$ (in the topology of $T_p^\mathbb{C}\partial \O$) consist of non-Goldilocks points. \end{proof}

\begin{rem}\label{rem:remarkaboutgeometricconditionsextendtogeodesicnonvisiblecase}
	Theorem \ref{thm:convexface} and Corollary \ref{cor:nonvisibilityinc2} extend to the case where $\O$ fails the geodesic visibility property.
\end{rem}

Unfortunately, the proof of Corollary \ref{cor:nonvisibilityinc2} does not extend to convex domains in $\Cn$ where $n\geq 3$. Namely in this case, if the limit set of $\epsilon$-geodesics $L$ does not contain a line segment, the plane containing $L$ may not be a complex line. In this case, we have no argument to see the existence of a line segment of non-Goldilocks points in the boundary. 

\section{Sufficient conditions for non-visibility in ($\C$-)convex domains}\label{sec:sufficientconditions}

\subsection{Motivating example: Rate of contact with the complex tangent space}

Let $\O\subset\Cn$ be a ($\mathbb C$-)convex domain, $p\in\partial\O$ and $U$ be a neighbourhood of $p$. By \eqref{eqn:Cconvexinfinitesimal} one may see that the first condition in Definition \ref{defn:goldilockspoints} can be characterized in terms of the rate of contact with the complex tangent space near $p$. Explicitly, $p\in\partial\O$ is a non-Goldilocks point if and only if for any sufficiently small neighbourhood of $U$ of $p$ we have some $\epsilon>0$ with
\begin{equation}\label{eqn:goldilocksconvexnew}\int^{\epsilon}_{0} \dfrac{N_{\O,U}(r)}{r} dr = \infty,\end{equation} where $ N_{\O,U}:= \sup\{\delta_\O(z;v):z\in\O\cap U \:\text{with}\:\delta_\O(z)\leq r, v\in\Cn\:\text{with}\:\|v\|=1\}$.

Consider the special type of domains defined by 
\begin{equation}
	\O_\Psi:=\{(z_1,z_2)\in\C^2:\Re z_2\geq \Psi(\Re z_1 )\},\label{eqn:modeldomains}
\end{equation}
where $\Psi:\mathbb R \rightarrow [0,\infty)$ is a smooth even convex function, increasing on $[0,\infty)$, verifying 
$\Psi(0)=0$. Since $\O_\Psi$ is a convex domain, the second item of Definition \ref{defn:goldilockspoints} is satisfied on $\O_\Psi$. Moreover, by \eqref{eqn:goldilocksconvexnew} it follows from \cite[Theorem 1.4]{BZ2} and a direct calculation that if $\Psi$ verifies \begin{equation}
	\label{eqn:examplesofvisible}\int_{0}^{\varepsilon} \frac{\Psi^{-1}(x)}{x}dx < \infty\end{equation} for some (hence any) $\varepsilon>0$ then $\O_\Psi$ satisfies the visibility property.  

On the other hand, note that if $\Psi$ satisfies \begin{equation}\label{eqn:profthomasresult}
	\Psi(x) = o( e^{\frac{-{\pi}}{2x}}), \:\:\:\:\: x\:\: \text{near} \:\:0,
\end{equation} then it follows by \cite[Proposition 5.4]{BNT} that $\O_\Psi$ fails the geodesic visibility (hence the weak visibility) property. In fact, we can easily extend this.
\begin{prop}\label{prop:exampleofnonvisiblemodeldomain}
	Let $\O:=\O_\Psi$ be defined as in \eqref{eqn:modeldomains}	and suppose that there exists $c>0$ with \begin{equation}\label{eqn:exampleofnonvisible}
		\Psi(x) = o(e^{-\frac{c}{x}}), \:\: x\:\: \text{near} \:\:0.
	\end{equation} Then $\O_\Psi$ fails the geodesic visibility (hence the weak visibility) property.
\end{prop}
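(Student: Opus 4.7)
The plan is to invoke Proposition \ref{prop:growthbnt}: $\O_\Psi$ is convex and contains no affine complex line (a short check using that $\Psi$ is nontrivial and grows at infinity), hence is complete hyperbolic, and failure of weak (equivalently geodesic) visibility is equivalent to the existence of $p\neq q\in\partial\O_\Psi$ and sequences $z_n\to p$, $w_n\to q$ in $\O_\Psi$ with $(z_n|w_n)^{\O_\Psi}_o\to\infty$. I will exhibit such a pair along the complex-tangential real line $\{(is,0):s\in\mathbb{R}\}\subset\partial\O_\Psi$, exploiting that the very fast decay of $\Psi$ leaves lots of room to move in the tangential direction while staying close to the boundary.

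Fix real numbers $a<b$ with $b-a<c$, put $p:=(ia,0)$, $q:=(ib,0)$, pick $\eps_n\to 0^+$, and set $z_n:=(ia,\eps_n)$, $w_n:=(ib,\eps_n)$. Then $z_n,w_n\in\O_\Psi$ (since $\Psi(0)=0$) and tend to $p,q$ respectively; fix a base point $o\in\O_\Psi$. A direct computation using $\Psi(0)=0$ gives $\d_{\O_\Psi}(z_n)=\d_{\O_\Psi}(w_n)=\eps_n$, so Lemma \ref{lem:classicallowerbound} yields
\[ k_{\O_\Psi}(z_n,o)+k_{\O_\Psi}(w_n,o)\geq|\log\eps_n|+O(1). \]

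For a matching upper bound on $k_{\O_\Psi}(z_n,w_n)$, I consider the complex-tangential curve $\gamma_n(t):=(i(a+t(b-a)),\eps_n)$, $t\in[0,1]$, whose tangent is $v:=(i(b-a),0)$. Since $\Psi$ is even, solving $\gamma_n(t)+\alpha v\in\partial\O_\Psi$ for $\alpha\in\C$ gives $\d_{\O_\Psi}(\gamma_n(t);v)=\Psi^{-1}(\eps_n)/(b-a)$, so Lemma \ref{lem:infinitesimallemma} yields $\k_{\O_\Psi}(\gamma_n(t);v)\leq (b-a)/\Psi^{-1}(\eps_n)$. A routine inversion of the hypothesis $\Psi(x)=O(e^{-c/x})$ gives $\Psi^{-1}(\eps)\geq c/(|\log\eps|+O(1))$ for $\eps>0$ small, so integrating,
\[ k_{\O_\Psi}(z_n,w_n)\leq\lk_{\O_\Psi}(\gamma_n)\leq\frac{b-a}{\Psi^{-1}(\eps_n)}\leq\frac{b-a}{c}|\log\eps_n|+O(1). \]
Combining the two estimates, since $(b-a)/c<1$,
\[ (z_n|w_n)^{\O_\Psi}_o\geq\tfrac{1}{2}\bigl(1-(b-a)/c\bigr)|\log\eps_n|+O(1)\to\infty, \]
and Proposition \ref{prop:growthbnt} then gives the failure of the weak (hence the geodesic) visibility property.

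The main obstacle I expect is the sharpness of the infinitesimal estimate along $\gamma_n$: the bound $\k\leq\d^{-1}$ from Lemma \ref{lem:infinitesimallemma} must be applied in the purely complex-tangential direction, and the resulting rate $\frac{b-a}{c}|\log\eps_n|$ must beat the $|\log\eps_n|$ coming from Lemma \ref{lem:classicallowerbound}. This is exactly what forces $b-a<c$ in the construction; but since only one pair $p\neq q$ is needed to rule out visibility, this is no restriction on the conclusion.
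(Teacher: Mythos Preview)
Your argument is correct. You take a different route from the paper, however: the paper observes that the linear rescaling $f_r(z_1,z_2)=(rz_1,z_2)$ sends $\O_\Psi$ biholomorphically onto $\O_{\Psi'}$ with $\Psi'(x)=\Psi(x/r)=O(e^{-cr/x})$, so choosing $r>\pi/(2c)$ places $\Psi'$ under the stronger hypothesis \eqref{eqn:profthomasresult} and the non-visibility of $\O_{\Psi'}$ then follows from \cite[Proposition~5.4]{BNT} as a black box; biholomorphic invariance carries this back to $\O_\Psi$ via Corollary~\ref{cor:characterizationofvisibility}. Your approach instead reproves the core estimate directly: you bound the Gromov product from below by comparing the radial lower bound of Lemma~\ref{lem:classicallowerbound} against a complex-tangential upper bound coming from Lemma~\ref{lem:infinitesimallemma}, and the freedom to choose $b-a<c$ makes the tangential term strictly smaller. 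The paper's proof is shorter and exploits the dilation symmetry of the model domains, but it hides the mechanism inside \cite{BNT}; your proof is self-contained, makes the competition between the two length scales explicit, and pinpoints concretely which pairs on $\{\Re z_1=0\}$ fail visibility (namely those at distance less than $c$).
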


\begin{proof}
	Let $\O$ be as above. Note that we can choose a constant $r>0$ such that the biholomorphism (of $\mathbb C ^2$) $f_r:\O\rightarrow\mathbb C^2$ defined by $(z_1,z_2)\mapsto(r z_1, z_2)$ maps $\O$ onto $\O'$ where a defining function of $\O'$ verifies \eqref{eqn:profthomasresult}. In particular, by Proposition \ref{prop:commongromovpartner} it follows that there exists $p'\in \partial\O'$ with $G_{p'}\neq \{p'\}$. Clearly, the map $f_r$ preserves Euclidean distance up to a multiplicative constant. Then by the biholomorphic invariance of the Kobayashi distance, we conclude that there exists $p\in\partial\O$ with $G_p\neq \{p\}$. By Corollary \ref{cor:characterizationofvisibility} we are done.
\end{proof}

In the examples above, seemingly, the Euclidean distance between points of $\partial\O\cap\{\text{Re} z_1=0\}$ plays a part in whether or not they are a weakly visible or geodesic visible pair.  See the proof of \cite[Proposition 5.4]{BNT} for details.

Proposition \ref{prop:exampleofnonvisiblemodeldomain} leaves us with a gap, as there are domains $\O_\Psi$ defined as in \eqref{eqn:modeldomains} where the function $\Psi$ does not satisfy the assumptions given in either of \eqref{eqn:examplesofvisible} and \eqref{eqn:exampleofnonvisible}. We will show that for such domains we can find non-visible $(\lambda,0)$-geodesics for any $\lambda > 1$.

Let $\O$ be a domain in $\C^2$ containing no affine complex lines and $\pi_\O:\O\to\partial\O$ denote the projection to the boundary. If the closest point to $z$ in $\partial\O$ is not unique, define $\pi_\O(z)$ to be one of the points $z'\in\partial\O$ verifying $\|z'-z\|=\delta_\O(z)$. Let $\{X_z,Y_z\}$ be a basis consisting of complex tangential and complex normal unit vectors to $\partial\O$ at $\pi_\O(z)$ respectively. Explicitly, let $X_z\in\C^2$ be a unit vector in $T^\C_{\pi_\O(z)}\partial\O$ and $Y_z:= (\pi_\O(z)-z)/\|\pi_\O(z)-z\|$. For $v\in\C^2$ set $v^\perp_z:= \langle v, X_z \rangle_\C X_z$ and $v_z:=\langle v, Y_z \rangle_\C Y_z$. With this notation, we have: 
\begin{lem}\cite[Proposition 14]{NPZ}\label{lem:NPZlemma}
	Let $\O$ be a convex domain in $\C^2$ containing no affine complex lines. There exists $c>0$ such that 
	\begin{equation}\label{eqn:NPZtriangleinequality}
		\k_\O(z;v_z) + \k_\O(z;v^\perp_z) \geq \k_\O(z;v) \geq c (\k_\O(z;v_z) + \k_\O(z;v^\perp_z)), \:\:\: \: \: z\in \O, v\in \mathbb{C}^2,
	\end{equation}
\end{lem}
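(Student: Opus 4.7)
My plan is to prove the two inequalities separately, treating the left one as a quick consequence of convexity of the Kobayashi--Royden pseudometric in its vector argument, and the right one by leveraging Lemma \ref{lem:infinitesimallemma} together with a single one-sided geometric estimate coming from the supporting hyperplane at $\pi_\O(z)$.

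For the left inequality I will invoke the subadditivity $\k_\O(z;v_z+v^\perp_z)\le \k_\O(z;v_z)+\k_\O(z;v^\perp_z)$. For convex $\O$ this is verified directly: given holomorphic discs $\varphi_i\colon\Delta\to\O$ with $\varphi_i(0)=z$ and $\lambda_i\varphi_i'(0)=v_i$, the convex combination $\varphi=\tfrac{\lambda_1}{\lambda_1+\lambda_2}\varphi_1+\tfrac{\lambda_2}{\lambda_1+\lambda_2}\varphi_2$ maps $\Delta$ to $\O$, satisfies $\varphi(0)=z$ and $(\lambda_1+\lambda_2)\varphi'(0)=v_1+v_2$, which gives $\k_\O(z;v_1+v_2)\le \lambda_1+\lambda_2$.

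For the right inequality the key step is the geometric estimate $\delta_\O(z;v)\le \delta_\O(z;v_z)$. Because $\O$ is convex, the real affine hyperplane passing through $\pi_\O(z)$ with outward real normal $Y_z$ supports $\O$, so $\Re\langle w-\pi_\O(z),Y_z\rangle\le 0$ for every $w\in\O$. Substituting $w=z+\alpha v$ and writing $v_2=\langle v,Y_z\rangle$, $d=\delta_\O(z)$, this becomes $\Re(\alpha v_2)\le d$. If $v_z\neq 0$, then $\alpha=d/v_2$ has $|\alpha|=d/|v_2|=\delta_\O(z;v_z)$ and puts $z+\alpha v$ exactly on the supporting hyperplane, hence in $\partial\O$ or outside $\ov\O$; in either case the segment from $z$ (inside $\O$) to $z+\alpha v$ crosses $\partial\O$ at some $\alpha'$ with $|\alpha'|\le d/|v_2|$, proving the claim. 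Applying Lemma \ref{lem:infinitesimallemma} at both sides,
$$\k_\O(z;v)\;\ge\;\tfrac12\delta_\O(z;v)^{-1}\;\ge\;\tfrac12\delta_\O(z;v_z)^{-1}\;\ge\;\tfrac12\k_\O(z;v_z).$$

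To treat $v^\perp_z$ I combine this with subadditivity and the symmetry $\k_\O(z;-w)=\k_\O(z;w)$:
$$\k_\O(z;v^\perp_z)=\k_\O(z;v-v_z)\le \k_\O(z;v)+\k_\O(z;v_z)\le 3\,\k_\O(z;v).$$
Adding the two bounds yields $\k_\O(z;v_z)+\k_\O(z;v^\perp_z)\le 5\,\k_\O(z;v)$, so $c=1/5$ suffices; the degenerate cases $v_z=0$ or $v^\perp_z=0$ are trivial. The only real obstacle is the asymmetry in the geometric estimate: one obtains $\delta_\O(z;v)\le \delta_\O(z;v_z)$ for free from the unique supporting hyperplane at the foot $\pi_\O(z)$, whereas a direct analogue $\delta_\O(z;v)\lesssim \delta_\O(z;v^\perp_z)$ would require understanding the supporting hyperplane at the ``tangential'' boundary point $z+\tau X_z$ whose orientation is not controlled. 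The clean way around this obstacle, as above, is to never prove the tangential estimate geometrically at all and instead recover it from the normal estimate via subadditivity.
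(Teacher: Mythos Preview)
Your proof is correct. The subadditivity via convex combinations of discs, the supporting-hyperplane estimate $\delta_\O(z;v)\le\delta_\O(z;v_z)$, the chain $\k_\O(z;v)\ge\tfrac12\delta_\O(z;v)^{-1}\ge\tfrac12\delta_\O(z;v_z)^{-1}\ge\tfrac12\k_\O(z;v_z)$, and the trick of recovering the tangential bound from the normal one via subadditivity all check out, and $c=1/5$ is a legitimate constant.

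Your route differs from the paper's in both halves. The paper does not prove the lemma but cites \cite[Proposition 14]{NPZ}; its only argument is the remark that the left inequality follows from Lempert's theorem, since on a convex domain the Kobayashi--Royden metric coincides with the Carath\'eodory--Reiffen metric, which is automatically a seminorm in $v$. Your convex-combination argument establishes subadditivity directly, without invoking Lempert, and is strictly more elementary. For the right inequality, \cite{NPZ} works with the minimal basis and obtains the estimate for $\C$-convex domains in $\C^n$; your argument is specific to convex domains (you need the supporting \emph{real} hyperplane at the nearest boundary point, and you reuse the convex subadditivity in Step~3), but it is short and self-contained. The gain in the paper's cited approach is generality ($\C$-convexity, arbitrary dimension); the gain in yours is that it needs nothing beyond Lemma~\ref{lem:infinitesimallemma} and the basic geometry of convex sets, and it makes transparent why the normal component alone controls everything.
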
 

The lemma above is a weaker form of \cite[Proposition 14]{NPZ}, which remains true on $\C$-convex domains in $\Cn$ in terms of the minimal basis (see for instance \cite[p. 3223]{NPT}). Note that the upper bound above follows as a consequence of convexity. Lempert's theorem \cite{L} implies that if $\O$ is convex the Kobayashi-Royden pseudometric of $\O$ coincides with its Carath\'eodory-Reiffen pseudometric (see for instance \cite[p. 22]{JP}). Then the upper bound in \eqref{eqn:NPZtriangleinequality} easily follows. 
%\begin{lem}\label{lem:NPZlemma}\cite[Proposition 14]{NPZ}
%	Let $\O\in\C^2$ be a convex domain containing no complex lines and assume that the minimal basis to $\O$ at $z$ is given by the set of vectors $\{v_{z,1},...,v_{z,j}\}$. Then, there exists constants $C\geq 1 \geq c > 0$ such that for any $z\in\O$ and for any $v\in\Cn$ the Kobayashi-Royden pseudometric of $\O$ satisfies 
%	\begin{equation}\label{eqn:NPZtriangleinequality}
	%		C \sum_{j=1}^{n} \k_\O(z;v_j) \geq \k_\O(z;v) \geq c \sum_{j=1}^{n} \k_\O(z;v_j), \:\:\: \: \: z\in \O, v\in \mathbb{C}^n,
	%	\end{equation}
%	where $v_j:=\langle v, v_{z,j} \rangle_\C v_{z,j}$. 
%\end{lem}

%\begin{rem}\label{rem:NPZlemmaforconvex}
%	If $\O$ is convex the celebrated Lempert theorem \cite{L} asserts that the Kobayashi-Royden pseudometric of $\O$ coincides with its CarathÃ©odory metric, thus $\k_\O$ has convex indicatrix. In particular, this shows that if $\O$ is convex then the constant $C\geq 1$ given in Lemma \ref{lem:NPZlemma} can be taken to be equal to $1$.
%\end{rem}

Lemma \ref{lem:NPZlemma} leads to the following proposition.

\begin{prop}\label{prop:example}
	Let $\O:=\O_\Psi$ be given as in \eqref{eqn:modeldomains} and suppose that $\Psi$ does not verify \eqref{eqn:examplesofvisible}.Then any $p\in\partial\O\cap\{\Re z_1=0\}$ is not a visible point. Explicitly, for any $\lambda>1$, there exist $(\lambda,0)$-geodesics $\gamma_n:[0,1]\to\O$ satisfying $\max_{t\in[0,1]}\delta_\O(\gamma_n(t))\to 0$, $\gamma_n(0)\to p$ and $\gamma_n(1)\not\to p$.
\end{prop}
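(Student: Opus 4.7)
The plan is to construct the required $(\lambda,0)$-geodesics as reparametrizations of hyperbolic geodesics of $\D$ pushed forward by analytic discs supported in a vanishing complex-tangential slab of $\O$ near $p$, in the spirit of the proof of \cite[Proposition 5.4]{BNT}. After translating, assume $p=(0,0)$, fix $\lambda>1$, and fix a small $y_0>0$ to be pinned down later in terms of $\lambda$. For each $n$ pick $\epsilon_n\downarrow 0$ and set $W_n:=\Psi^{-1}(\epsilon_n)\to 0$. Use the explicit conformal map $h_n(\zeta):=-\tfrac{4iW_n}{\pi}\tanh^{-1}(\zeta)$, which sends $\D$ onto the vertical strip $\{|\Re w|<W_n\}$, takes $0$ to $0$, and maps the real axis of $\D$ onto the imaginary axis of the strip. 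Define $\varphi_n:\D\to\O$ by $\varphi_n(\zeta):=(h_n(\zeta),\epsilon_n)$; since $|\Re h_n(\zeta)|<W_n$ forces $\Psi(\Re h_n(\zeta))<\epsilon_n$, this lands in $\O$, and the whole image satisfies $\delta_\O(\varphi_n(\zeta))\leq\epsilon_n$.

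Next, let $r_n\in(0,1)$ be the real number with $h_n(r_n)=iy_0$, so $r_n=\tanh(\pi y_0/(4W_n))$, and let $H_n:[0,1]\to\D$ be the hyperbolic geodesic from $0$ to $r_n$ parametrized at constant hyperbolic speed. Set $\gamma_n:=\varphi_n\circ H_n$. Then $\gamma_n(0)=(0,\epsilon_n)\to p$, $\gamma_n(1)=(iy_0,\epsilon_n)\to(iy_0,0)\neq p$, and $\max_t\delta_\O(\gamma_n(t))\leq\epsilon_n\to 0$, so the three topological conditions come for free. The distance-decreasing property of $\varphi_n$ immediately yields $l^k_\O(\gamma_n|_{[t_1,t_2]})\leq k_\D(H_n(t_1),H_n(t_2))$ for every sub-interval $[t_1,t_2]\subset[0,1]$.

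The substantive task, and also the main obstacle, is to establish the matching lower bound $k_\O(\gamma_n(t_1),\gamma_n(t_2))\geq \lambda^{-1}k_\D(H_n(t_1),H_n(t_2))$, which combined with the upper bound above gives $l^k_\O(\gamma_n|_{[t_1,t_2]})\leq \lambda\,k_\O(\gamma_n(t_1),\gamma_n(t_2))$ and hence the $(\lambda,0)$-geodesic property. For this I would estimate from below the Kobayashi--Royden pseudometric along any competing curve $\beta$ joining $\gamma_n(t_1)$ to $\gamma_n(t_2)$ by using Lemma \ref{lem:NPZlemma} to split $\kappa_\O(\beta;\beta')$ into its complex-tangential and complex-normal parts, each of which is bounded below by $|\beta_1'|/\Psi^{-1}(\delta_\O(\beta))$ and $|\beta_2'|/\delta_\O(\beta)$ respectively through Lemma \ref{lem:infinitesimallemma}. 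One then uses the failure of \eqref{eqn:examplesofvisible} to argue that any detour of $\beta$ into deeper regions of $\O$ incurs a logarithmic radial cost of order $\log(D/\epsilon_n)$ that dominates the tangential savings of order $|y_0|/\Psi^{-1}(D)$ once $y_0$ is small enough in terms of $\lambda$, so that the strip path $\gamma_n$ is $\lambda$-optimal among paths connecting its endpoints. The need to lose a factor $\lambda>1$ when comparing the strip length with the true Kobayashi distance — coming from the NPZ comparison constant and from competing up--across--down paths through the interior — is what explains why the statement cannot be upgraded to $\lambda=1$.
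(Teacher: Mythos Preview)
Your construction keeps $\gamma_n$ in the slice $\{z_2=\epsilon_n\}$, so every point of $\gamma_n$ has the same boundary distance and Lemma~\ref{lem:classicallowerbound} is vacuous. The lower bound $k_\O(\gamma_n(t_1),\gamma_n(t_2))\ge\lambda^{-1}k_\D(H_n(t_1),H_n(t_2))$ that you need instead is simply false for the $\Psi$ this proposition is meant to cover. Take $\Psi^{-1}(x)=1/\bigl(\log(1/x)\log\log(1/x)\bigr)$ near $0$; then $\int_0^\varepsilon \Psi^{-1}(x)\,dx/x=\infty$, so \eqref{eqn:examplesofvisible} fails. The up--across--down competitor through a \emph{fixed} height $D$ shows
\[
k_\O\bigl((0,\epsilon_n),(iy_0,\epsilon_n)\bigr)\le \log(D/\epsilon_n)+C\asymp \log(1/\epsilon_n),
\]
while the convex lower bound in Lemma~\ref{lem:infinitesimallemma} along your curve gives
\[
l^k_\O(\gamma_n)\ge \int_0^1\frac{|\gamma_{n,1}'(t)|}{2W_n}\,dt\ge \frac{y_0}{2W_n}=\frac{y_0}{2}\log(1/\epsilon_n)\log\log(1/\epsilon_n).
\]
Hence $l^k_\O(\gamma_n)/k_\O(\gamma_n(0),\gamma_n(1))\to\infty$, and your $\gamma_n$ are not $(\lambda,0)$-geodesics for any fixed $\lambda$. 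Your detour heuristic tacitly assumes $W_n\log(1/\epsilon_n)$ is bounded below, which is exactly condition \eqref{eqn:exampleofnonvisible}; the divergence of the integral in \eqref{eqn:examplesofvisible} is a cumulative statement and gives no such pointwise control. In effect you are reproving \cite[Proposition~5.4]{BNT}, not the gap case that is the point here.

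The paper's argument is organized around a different idea: the curves $\gamma_n(t)=(it,f_n(t))$ are allowed to drift \emph{outward} in the normal direction, with $f_n$ determined by the ODE $\Psi^{-1}(f_n)f_n'/f_n=c^{-1}$ so that the tangential and normal Kobayashi--Royden speeds are in fixed ratio. The divergence of \eqref{eqn:examplesofvisible} is used not for a distance lower bound but to show that the terminal height $f_n(1)$ still tends to $0$, i.e.\ the curves remain near $\partial\O$. Because $\delta_\O(\gamma_n(t))\approx f_n(t)$ is now monotone, Lemma~\ref{lem:classicallowerbound} directly gives $k_\O(\gamma_n(t_1),\gamma_n(t_2))\ge\tfrac12\log\bigl(f_n(t_2)/f_n(t_1)\bigr)$, and the upper bound on $l^k_\O$ with constant $1+2c$ follows from Lemma~\ref{lem:NPZlemma}. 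Letting $c\downarrow 0$ yields every $\lambda>1$.
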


\begin{proof} 
	We may assume that $p=(0,0)$. 
	
	Set $p_n:=(0,1/n)$ and $\gamma_n:[0,1]\rightarrow \O$ to be the curve given by $\gamma_n(t):=(it,f_n(t))$ where $f_n$ is an increasing function, $\gamma_n(0)=p_n$ and $\delta_\O(\gamma_n(t);(i,0))=c \delta_\O(\gamma_n(t);(0,f_n'(t)))$. Note that such a curve exists by the existence and uniqueness theorem of ODE's.
	
	\textit{Claim 1:} $$ \displaystyle{\lim_{n\to \infty} \max_{t\in[0,1]}\delta_\O(\gamma_n(t)) = 0}. $$
	
	\textit{Proof of Claim 1:} 
	A direct calculation shows that \begin{equation}\label{eqn:exampledirectionaldistance}
		\delta^{-1}_\O(\gamma_n(t);(i,0)) = 1/\Psi^{-1}(f_n(t)) \:\:\:\:\: \text{and} \:\:\:\:\: \delta^{-1}_\O(\gamma_n(t);(0,f_n'(t)))=f'_n(t)/f_n(t).\end{equation}
	
	Therefore our assumption gives \begin{equation}\label{eqn:thehit}
		\dfrac{\Psi^{-1}(f_n(t))f'_n(t)}{f_n(t)} = c^{-1}.
	\end{equation}
	
	Let $  D_n:= \max_{t\in[0,1]}\delta_\Omega(\gamma_n(t)) = \delta_\Omega(\gamma_n(1)) .$ Then, by \eqref{eqn:thehit} we have 
	$$ c^{-1}=\int_{0}^{1} c^{-1} dt = \int_{0}^{1}\dfrac{\Psi^{-1}(f_n(t))f'_n(t)}{f_n(t)}dt = \int_{1/n}^{D_n} \dfrac{\Psi^{-1}(y)dy}{y} .$$
	
	Suppose that $D_n\not\to0$. Then, the inequality above contradicts the fact that the integral in \eqref{eqn:examplesofvisible} diverges. Claim 1 follows. 
	
	\textit{Claim 2:} For large $n$, $\gamma_n$ is a $(1+2c,0)$-geodesic.
	
	\textit{Proof of Claim 2.} By Claim 1, the images of $\gamma_n$ tend to the boundary. Then for large $n$ and $t\in[0,1]$ a basis consisting of a complex tangential and a complex normal unit vector to $\partial\O$ at $\pi_\O(\gamma_n(t))$ is given by  $\{(1,0),(0,1)\}$ respectively. It follows by Lemma \ref{lem:infinitesimallemma} and \eqref{eqn:exampledirectionaldistance} that $\k_\O(\gamma_n(t);(0,f'_n(t)))\geq \delta^{-1}_\O(\gamma_n(t);(0,f'_n(t))) /2 = {f'_n(t)}/{2f_n(t)}$. Furthermore, by considering the embedding of the right half plane to $\O$ given by $z\mapsto(0,z)$ we get $\k_\O(\gamma_n(t);(0,f'_n(t)))\leq \delta^{-1}_\O(\gamma_n(t);(0,f'_n(t))) /2 = {f'_n(t)}/{2f_n(t)}$ hence $\k_\O(\gamma_n(t);(0,f'_n(t)))={f'_n(t)}/{2f_n(t)}$. Then  Lemma \ref{lem:infinitesimallemma}, Lemma \ref{lem:NPZlemma} and \eqref{eqn:exampledirectionaldistance} show that for large $n$ and $t\in[0,1]$ we have 
	\begin{equation}\label{eqn:exampletriangle}
		\k_\O(\gamma_n(t);\gamma_n'(t)) =  \k_\O(\gamma_n(t);(i,f_n'(t))) \leq  \k_\O(\gamma_n(t);(0,f_n'(t))) + \k_\O(\gamma_n(t);(i,0)) \leq \end{equation} $$ (1+2c) \k_\O(\gamma_n(t);(0,f_n'(t))) = \dfrac{(1+2c) f'_n(t)}{2f_n(t)} .$$
	Let $t_1<t_2\in[0,1]$ be arbitrary, then by \eqref{eqn:exampletriangle} we have 
	$$ \lk_\O(\gamma_n|_{[t_1,t_2]}) \leq (1+2c)\displaystyle{\int_{t_1}^{t_2}\dfrac{f_n'(t)}{2f_n(t)}dt = \dfrac{(1+2c)}{2}\log\dfrac{f_n(t_2)}{f_n(t_1)}=\dfrac{(1+2c)}{2}\log\dfrac{\delta_\O(\gamma_n(t_2))}{\delta_\O(\gamma_n(t_1))}} .$$
	
	On the other hand by Lemma \ref{lem:classicallowerbound} we have $$ k_\O(\gamma_n(t_1),\gamma_n(t_2)) \geq \dfrac{1}{2}\log\dfrac{\delta_\O(\gamma_n(t_2))}{\delta_\O(\gamma_n(t_1))} .$$  Claim 2 follows. 
	
	By choosing $c >0$ as small as we like, by Claims 1 and 2 we are done.   
\end{proof}

Unfortunately, we were not able to decide whether the domains $\O_\Psi$ of the form \eqref{eqn:modeldomains} where $\Psi$ fails both \eqref{eqn:examplesofvisible} and \eqref{eqn:exampleofnonvisible} satisfy the geodesic visibility (hence the weak visibility) property or not.

\subsection{A generalization: Strongly non-Goldilocks points and non-visibility}

The goal of this section is to extend Proposition \ref{prop:example} to the general case for convex domains with $\mathcal{C}^1$-boundary. 

\begin{defn}\label{defn:stronglynongoldilocksdefinition}
	Let $\O$ be a domain in $\Cn$. A $\mathcal C^{1}$-smooth boundary point $p\in\partial\O$ is said to be a \emph{weakly Goldilocks point} if for some $\epsilon>0$ we have 
	$$
	\displaystyle{\int_0^\epsilon \dfrac{\sup\{\k^{-1}_\O(p_{r'};v):v\in\Cn, \:\:\: \|v\|=1, \:\:\:r'\leq r\}}{r}dr < \infty},
	$$
	where $p_r:=p+r\eta_p$ and $\eta_p$ denotes inner unit normal to $\partial\O$ given at $p$. 
\end{defn}
Points which are not weakly Goldilocks are called \emph{strongly non-Goldilocks}.
It is clear that being a locally Goldilocks point in the sense of Definition \ref{defn:goldilockspoints} implies being a weakly Goldilocks point. A counter-example in the next subsection shows that there are points which are not locally
Goldilocks, but still are weakly Goldilocks,
%\textbf{ We are not able to decide if the two notions are equivalent. }

If $\O\subset\Cn$ is a $\C$-convex domain then \eqref{eqn:Cconvexinfinitesimal} implies that $p\in\partial\O$ is a strongly non-Goldilocks point if and only if \begin{equation}\label{eqn:stronglynonGoldiconvexcase}
	\displaystyle{\int_0^\epsilon \dfrac{\sup\{\delta_\O(p_{r'};v):v\in\Cn, \:\:\: \|v\|=1, \:\:\:r'\leq r\}}{r}dr=\infty}.
\end{equation} 

In fact, we can say even more if $\O\subset\C^2$ is convex.

\begin{prop}\label{prop:c2nongoldilocksissimpler}
	Let $\O$ be a convex domain in $\C^2$, $p\in\partial\O$ be a $\mathcal C^1$-smooth boundary point and $X\in T^\C_p \partial \O$ be a unit vector. Set $M_{\O,p}(r):=\delta_\O(p_r;X).$ Then $p$ is a strongly non-Goldilocks point if and only if 
	$$\displaystyle{\int_0^\epsilon \dfrac{M_{\O,p}(r)}{r}dr=\infty}.$$ 
\end{prop}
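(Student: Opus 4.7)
The plan is to deduce the integral characterisation by showing that the supremum appearing in \eqref{eqn:stronglynonGoldiconvexcase} (the $\C$-convex version of the strongly non-Goldilocks condition) is comparable, up to absolute constants, to $M_{\O,p}(r)$. One direction is free: taking $v=X$ and $r'=r$ gives $\delta_\O(p_{r};X)=M_{\O,p}(r)$, so the supremum trivially dominates $M_{\O,p}(r)$. All the content lies in the reverse inequality $\sup_{\|v\|=1}\delta_\O(p_r;v)\lesssim M_{\O,p}(r)$.

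The main step is to apply the Nikolov-Pflug-Zwonek decomposition of Lemma \ref{lem:NPZlemma} at the specific points $z=p_r$. By $\mathcal{C}^1$-smoothness of $\partial\O$ at $p$, for all $r$ small enough $p$ is the unique nearest boundary point to $p_r$, so the NPZ basis at $p_r$ is precisely $\{X,\eta_p\}$. Combining Lemma \ref{lem:NPZlemma} with Lemma \ref{lem:infinitesimallemma} to convert Kobayashi-Royden norms into directional distances, any unit vector $v=aX+b\eta_p$ with $|a|^2+|b|^2=1$ satisfies
$$
\delta_\O(p_r;v)^{-1}\asymp \frac{|a|}{M_{\O,p}(r)}+\frac{|b|}{r},
$$
where I use $\delta_\O(p_r;\eta_p)=r$ (upper bound from $p_r-r\eta_p=p\in\partial\O$, lower bound from $\delta_\O(z;v)\geq \delta_\O(z)$). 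Maximising $\delta_\O(p_r;v)$ over unit $v$ amounts to minimising the linear functional $|a|/M_{\O,p}(r)+|b|/r$ on the arc $\{|a|^2+|b|^2=1,\ |a|,|b|\geq 0\}$; this functional is strictly concave in the arc-length parameter, so its minimum is attained at an endpoint, giving $\min(1/M_{\O,p}(r),1/r)$, which equals $1/M_{\O,p}(r)$ because $M_{\O,p}(r)\geq \delta_\O(p_r)=r$. Hence $\sup_{\|v\|=1}\delta_\O(p_r;v)\asymp M_{\O,p}(r)$.

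It remains to upgrade $\sup_v$ to $\sup_{r'\leq r}\sup_v$ by showing that $r\mapsto M_{\O,p}(r)$ is non-decreasing. Locally at $p$, by $\mathcal{C}^1$-smoothness and convexity I can write $\O$ as the graph $\{\Re z_1>h(\Im z_1,z_2)\}$ with $h$ convex, non-negative, and vanishing at the origin. Then $p_r+\alpha X\in\O$ iff $\sigma(\alpha):=h(0,\alpha)<r$, so $M_{\O,p}(r)=\sup\{|\alpha|:\sigma(\alpha)<r\}$, which is monotone in $r$ because the sublevel sets of $\sigma$ are nested. Combining the two steps yields $\sup\{\delta_\O(p_{r'};v):\|v\|=1,\ r'\leq r\}\asymp M_{\O,p}(r)$, and the stated equivalence of integrals follows from \eqref{eqn:stronglynonGoldiconvexcase}. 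I expect the main delicate point to be the precise verification of the NPZ decomposition at $z=p_r$, which rests on $\pi_\O(p_r)=p$ for small $r$; the $\mathcal{C}^1$ hypothesis is exactly what prevents multi-valued normal feet here. The rest is elementary endpoint optimisation and a sublevel-set observation.
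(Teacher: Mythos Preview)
Your proof follows the same two-step route as the paper: first compare $\sup_{\|v\|=1}\delta_\O(p_r;v)$ to $M_{\O,p}(r)$ via Lemmas~\ref{lem:infinitesimallemma} and~\ref{lem:NPZlemma} (using that $\pi_\O(p_r)=p$ for small $r$, so the NPZ basis is $\{X,\eta_p\}$), then show $r\mapsto M_{\O,p}(r)$ is non-decreasing to absorb the inner $\sup_{r'\le r}$. One small slip: $M_{\O,p}(r)$ is the \emph{inscribed} radius of $\{\sigma<r\}$, i.e.\ $\inf\{|\alpha|:\sigma(\alpha)\ge r\}$, not the $\sup$ you wrote; the nested-sublevel-set argument still yields monotonicity, so this is harmless. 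The only substantive difference is in the monotonicity step: your local graph picture is valid only while the relevant $\alpha$ stay in the chart (so implicitly uses $M_{\O,p}(r)\to 0$), whereas the paper shows directly that $M_{\O,p}$ is \emph{concave} in $r$ by observing that convex combinations of the extremal discs $\zeta\mapsto p_{r_i}+\zeta\,M_{\O,p}(r_i)X$ remain in $\O$ --- a global argument that gives monotonicity near $0$ without this caveat.
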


\begin{proof}
	As the other implication is clear, we only need to show that if $p$ is a strongly non-Goldilocks point then we must have $\int_0^\epsilon {M_{\O,p}(r)}/{r}dr=\infty$.
	Assume that $p\in\partial\O$ is a strongly non-Goldilocks point. 
	
	\textit{Claim 1:} There exists $C>0$ depending on $\O$, and $\delta_1>0$ depending on $p$ with $$\sup\{\delta_\O(p_{r};v):v\in\C^2, \:\:\: \|v\|=1\} \leq C M_{\O,p}(r)$$ if $r < \delta_1$. 
	
	To see Claim 1, let $\delta>0$ be small enough so that for $r\in(0,\delta)$ the closest point to $p_r$ in $\partial\O$ is given by $p$. Let $Y:=(p_r-p)/\|p_r-p\|$ and $v\in\C^2$ with $\|v\|=1$ be arbitrary. Write $v=\alpha X + \beta Y$ where $\alpha,\beta\in \C$ with $|\alpha|^2+|\beta|^2=1$.  Then by Lemma \ref{lem:infinitesimallemma} and Lemma \ref{lem:NPZlemma} we have  $$\delta_\O(p_r;v)\leq \k^{-1}_\O(p_r;v) \leq c^{-1} (|\alpha| \k_\O(p_r;X)+|\beta| \k_\O(p_r;Y))^{-1} \leq \dfrac{1}{2c} \k^{-1}_\O(p_r;X) \leq \dfrac{1}{c} M_{\O,p}(r).$$ Note that above we use the fact that $\k_\O(p_r;X) \leq \k_\O(p_r;Y)$ which follows from boundary regularity. Setting $\delta_1:=\delta$ and taking the supremum over $v$, we have Claim 1. 
	
	\textit{Claim 2:} There exists $\delta_2>0$ such that $M_{\O,p}(r)$ is a monotone increasing function on $(0,\delta_2)$.
	
	Note that $M_{\O,p}(r)$ is a concave function of $r$. To see this choose $r_1 < r_2$ small enough and consider the analytic discs given by $D_1(\zeta):=p_{r_1}+\zeta\delta_\O(p_{r_1};X) X$, $D_2(\zeta):=p_{r_2}+\zeta\delta_\O(p_{r_2};X)X$. Take $t\in[0,1]$ and consider the point $p^t:=t p_{r_1}+(1-t)p_{r_2} \in \O$. As $D_1(\Delta),D_2(\Delta) \subset \O$ we have by convexity that the image of $D_t(\zeta):= t D_1(\zeta) + (1-t) D_2(\zeta)$ lies in $\O$. This implies that $M_{\O,p}(t r_1 + (1-t)r_2)= \delta_\O(p^t;X) \geq t\delta_\O(p_{r_1};X)+(1-t)\delta_\O(p_{r_2};X) = t M_{\O,p}(r_1) + (1-t)M_{\O,p}(r_2)$. Now, since $M_{\O,p}(r)$ is a concave function of $r$, it attains its maximum value at (at most) one point, say $\delta_2$, thus $M_{\O,p}(r)$ is a monotone increasing function on $(0,\delta_2)$ hence Claim 2 follows. 
	
	Now, consider $r < \delta$ where $\delta:=\min\{\delta_1,\delta_2\}$ with $\delta_1$ as in Claim 1 and $\delta_2$ as in Claim 2. Then there exists $C>0$ depending on $\O$ such that for $r\in (0,\delta)$ we have $$ \sup\{\delta_\O(p_{r'};v):v\in\C^2, \:\:\: \|v\|=1, \:\:\:r'\leq r\}\leq C M_{\O,p}(r).$$ By Claims 1 and 2 and \eqref{eqn:stronglynonGoldiconvexcase} we are done.
\end{proof}

Actually, if $\O\subset\Cn$ (not necessarily $n=2$) is a convex domain containing no affine complex lines with no non-trivial analytic discs in the boundary, then the proof of \cite[Lemma 2.3]{B} (see the result of D'Addezio in \cite[Lemma A.1]{BG}) implies that $$\lim_{r\to 0}\sup\{\delta_\O(z;v): \delta_\O(z)\leq r, \:\:\: \|v\|=1\}=0.$$ Therefore if $\O$ has $\mathcal C^{1,1}$-smooth boundary a stronger statement than Claim 2 above holds. Namely, in that case the uniform inner ball condition implies that there exists $\delta>0$ such that $$\sup\{\delta_\O(z;v):v\in\Cn, \:\:\: \|v\|=1, \:\:\: \delta_\O(z)\leq r \}=\sup\{\delta_\O(z;v):v\in\Cn, \:\:\: \|v\|=1, \:\:\: \delta_\O(z)= r\}$$ for $r\in(0,\delta).$

One generalization of Proposition \ref{prop:example} is the following:

\begin{thm}\label{thm:nonvisibleinC2}
	Let $\O\subset\C^2$ be a convex domain containing no affine complex lines with $\mathcal C^1$-smooth boundary and $p\in\partial \O$. Suppose that $p$ is a strongly non-Goldilocks point and $F_p\neq\{p\}$. Then $p$ is not a visible point. Explicitly, there exists $\lambda>1$ such that $\O$ has $(\lambda,0)$-geodesics $\gamma_n:[0,1]\to\O$ with  $\max_{t\in[0,1]}\delta_\O(\gamma_n(t))\to 0$, $\gamma_n(0)\to p$ and $\gamma_n(1)\not\to p$.
\end{thm}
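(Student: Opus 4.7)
The plan is to generalize the explicit construction of Proposition~\ref{prop:example}. Since $F_p \neq \{p\}$, by convexity $\partial \Omega$ contains a real segment through $p$ in some complex tangential direction; after an affine change of coordinates I take $p = 0$, inner unit normal $\eta_p = (0, 1)$, and $\{(t, 0): t \in [0, \delta_0]\} \subset \partial \Omega$. Locally $\Omega = \{\Re z_2 > \phi(z_1, \Im z_2)\}$ with $\phi$ convex, $\mathcal{C}^1$, non-negative, and $\phi(t, 0) = 0$ on the segment. By Proposition~\ref{prop:c2nongoldilocksissimpler}, the strongly non-Goldilocks hypothesis reads $\int_0^\epsilon M_{\Omega,p}(r)/r\,dr = \infty$, where $M_{\Omega,p}(r) = \delta_\Omega((0,r);(1,0))$ is the Euclidean distance in $\mathbb{C}$ from $0$ to the convex level set $A_r := \{\alpha \in \mathbb{C}: \phi(\alpha, 0) \leq r\}$.

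Fix a large $c > 0$. Let $f_n: [0, \delta_0/2] \to (0, \infty)$ be the positive increasing solution of the autonomous ODE $M_{\Omega,p}(f_n(t))\,f_n'(t)/f_n(t) = c$ with $f_n(0) = 1/n$, and put $\gamma_n(t) := (t, f_n(t))$. The $\mathcal{C}^1$-flatness of $\phi$ along the segment yields $\pi_\Omega(\gamma_n(t)) = (t, 0)$ and $\delta_\Omega(\gamma_n(t)) = f_n(t)$. Substituting $y = f_n(t)$ in the integrated ODE gives
\[
\int_{1/n}^{f_n(\delta_0/2)} \frac{M_{\Omega,p}(y)}{y}\,dy = \frac{c\delta_0}{2}.
\]
Divergence of $\int_0^{\eta} M_{\Omega,p}(y)/y\,dy$ at $0$ then forces $f_n(\delta_0/2) \to 0$, whence $\max_t \delta_\Omega(\gamma_n(t)) \to 0$, $\gamma_n(0) \to p$, and $\gamma_n(\delta_0/2) \to (\delta_0/2, 0) \neq p$. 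A reparametrization onto $[0,1]$ gives the curves in the statement.

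It remains to show each $\gamma_n$ is a $(\lambda, 0)$-geodesic with $\lambda > 1$ independent of $n$. Decomposing $\gamma_n'(t) = (1, 0) + (0, f_n'(t))$ into tangential and normal components relative to $\pi_\Omega(\gamma_n(t)) = (t, 0)$, Lemmas~\ref{lem:NPZlemma} and~\ref{lem:infinitesimallemma} give
\[
\kappa_\Omega(\gamma_n(t); \gamma_n'(t)) \leq \frac{1}{\delta_\Omega(\gamma_n(t); (1,0))} + \frac{f_n'(t)}{f_n(t)}.
\]
The crucial step is the convex-geometric lower bound $\delta_\Omega(\gamma_n(t); (1, 0)) \geq M_{\Omega,p}(f_n(t))/2$ for $t \in [0, \delta_0/2]$, which follows from the inclusion $[0, \delta_0/2] + B(0, M_{\Omega,p}(r)/2) \subset \operatorname{conv}([0, \delta_0] \cup B(0, M_{\Omega,p}(r))) \subset A_r$, obtained by writing $t + v = \tfrac{1}{2}(2t) + \tfrac{1}{2}(2v)$ with $2t \in [0, \delta_0]$ and $|2v| \leq M_{\Omega,p}(r)$. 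Using the ODE this yields $\kappa_\Omega(\gamma_n(t); \gamma_n'(t)) \leq (1 + 2/c)f_n'(t)/f_n(t)$, so the Kobayashi length of $\gamma_n|_{[t_1, t_2]}$ is at most $(1 + 2/c)\log(f_n(t_2)/f_n(t_1))$; combined with $k_\Omega(\gamma_n(t_1), \gamma_n(t_2)) \geq \tfrac{1}{2}\log(f_n(t_2)/f_n(t_1))$ from Lemma~\ref{lem:classicallowerbound}, this exhibits $\gamma_n$ as a $(2(1 + 2/c), 0)$-geodesic. Taking $c$ large gives any $\lambda > 2$, which suffices. The main obstacle is the convex-geometric lower bound above; a secondary technical point is the lack of a half-plane embedding in the normal direction (available in the model case of Proposition~\ref{prop:example} but not in general), which is why this approach is limited to $\lambda > 2$ rather than allowing $\lambda$ arbitrarily close to $1$.
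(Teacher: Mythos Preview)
Your proof is correct and follows essentially the same strategy as the paper's: both set up coordinates so that the segment lies in the complex tangential direction, establish the same convex-geometric lower bound $\delta_\O((t,r);X)\ge \tfrac12 M_{\O,p}(r)$ along the half-segment (your ``crucial step'' is the paper's Claim~1), construct curves $\gamma_n$ via an ODE balancing tangential and normal directional distances, and then combine Lemma~\ref{lem:classicallowerbound} with the subadditivity in Lemma~\ref{lem:NPZlemma} to verify the $(\lambda,0)$-geodesic property. The only notable differences are cosmetic: you use the autonomous ODE $M_{\O,p}(f_n)f_n'/f_n=c$ (which is slightly cleaner) in place of the paper's implicit relation $\delta_\O(\gamma_n;(0,f_n'))=\delta_\O(\gamma_n;X)$, and you obtain any $\lambda>2$ rather than the paper's fixed $\lambda=4$.
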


\begin{proof}
	Let $q\in F_p$ such that $q\neq p$ then $X=(p-q)/\|(p-q)\|\in T_p^\C \partial \O$. Note that boundary regularity and convexity show that the inner unit normal to $\partial \O$ coincides at any point in $F_p$.	
	
	\textit{Claim 1:} There exists a line segment $S\subset F_p$ containing $p$ and a function $H:(0,\delta)\to\mathbb{R}^+$ verifying 
	$ \int_0^\delta H(r)/r dr = \infty$ and $M_{\O, s}(r) \geq H(r) $ for any $s\in S$ and $r\in(0,\delta). $
	
	To see Claim 1, note that by the proof of Proposition \ref{prop:c2nongoldilocksissimpler} there exists $\delta>0$ such that $M_{\O,p}(r)$ is monotone increasing function on $r\in(0,\delta)$ and $\int_{0}^\delta M_{\O,p}(r)/r dr = \infty.$	Let $S=\{ (1-t) p + t q: t \in [0,1/2]\}$, $f_r(\zeta):=p_r+ \zeta \delta_\O(p_r;X) X$ and $f_{r,t}(\zeta):=(1-t) f_r(\zeta)  + t q_r$. Note that by convexity $S\subset F_p$ and the images of $f_{r,t}$ lie in $\O$. Consequently $ \delta_\O(((1-t) p+ t q)_r;X) \geq 1/2 M_{\O,p}(r)$. Setting $H(r):= 1/2 M_{\O,p}(r)$, Claim 1 follows. 
	
	Now let $S, H(r)$ be as in Claim 1. We may assume that $p=(0,0), q=(i\tau, 0)$ for some $\tau>0$ so we may write $S=\{(it,0):t\in[0,\tau/2]\}$. For any $n\geq 1$ consider the curves $\gamma_n(t):=(i\tau t/2,f_n(t))$ where $f_n:[0,1]\rightarrow \mathbb{R}$ is a $\mathcal C^1$-smooth increasing function satisfying $f_n(0)=1/n$. Suppose furthermore that $\gamma_n$ verifies $\delta_\O(\gamma_n(t);(0,f'_n(t)))=\delta_\O(\gamma_n(t);(i \tau/2,0)) .$ Such a curve exists by the existence and uniqueness theorem of ODE's. 
	
	\textit{Claim 2:} $$ \displaystyle{\lim_{n\to \infty} \max_{t\in[0,1]} \delta_\O(\gamma_n(t)) = 0} .$$ Thus for large enough $n$ and $t\in [0,1]$ we have $\delta_\O(\gamma_n(t))=f_n(t)$.
	
	\textit{Proof of Claim 2:} Note that $$\delta^{-1}_\O(\gamma_n(t);(0,f'_n(t)))=f'_n(t)/f_n(t) \:\:\: \text{and} \:\:\: \delta^{-1}_\O(\gamma_n(t);(i \tau/2,0)) \leq \tau/H(f_n(t)) .$$ Then the proof follows from the same arguments we provided in the proof of Proposition \ref{prop:example}, Claim 1. The only difference is we replace $\Psi^{-1}(f_n(t))$ with $H(f_n(t))$.
	
	\textit{Claim 3:} Each $\gamma_n$ is a $(4,0)$-geodesic.  
	
	\textit{Proof of Claim 3:} Let $t_1\leq t_2 \in [0,1]$ be arbitrary. Then as $\gamma_n([0,1])$ tend to the boundary and for large $n$ and $t\in [0,1]$ we have $\delta_\O(\gamma_n(t))=f_n(t)$, Lemma \ref{lem:classicallowerbound} and Lemma \ref{lem:NPZlemma} implies that
	
	$$ k_\O(\gamma_n(t_1),\gamma_n(t_2)) \geq \dfrac{1}{2}\log\left(\dfrac{f_n(t_2)}{f_n(t_1)}\right) $$ and
	
	$$ \displaystyle{\lk_\O(\gamma_n |_{[t_1,t_2]}) = \int_{t_1}^{t_2} \k_\O(\gamma_n(t);\gamma'_n(t))dt \leq \int_{t_1}^{t_2} \k_\O(\gamma_n(t);(i \tau / 2,0))dt+\int_{t_1}^{t_2} \k_\O(\gamma_n(t);(0,f'_n(t)))dt \leq } $$ $$  
	\displaystyle{ 2 \int_{t_1}^{t_2} \delta^{-1}_\O(\gamma_n(t);(0,f'_n(t)))dt =  2 \int_{t_1}^{t_2} \dfrac{f'_n(t)}{f_n(t)}dt  = 2 \log\left(\dfrac{f_n(t_2)}{f_n(t_1)}\right) } .$$
	
	Hence we have Claim 3.  
	
	By Claims 2 and 3, taking $\lambda = 4$ we are done. 
\end{proof}

Theorem \ref{thm:nonvisibleinC2} extends to $\C$-convex domains in the following way:

\begin{cor}\label{cor:corollaryforCconvex}
	Let $\O\subset\C^2$ be a $\C$-convex domain containing no affine complex lines, $p\in\partial \O$, $X\in T^\C_p\partial\O$ be a unit vector vector and $S\subset T^\C_p\partial\O\cap \partial\O$ be a line segment containing $p$ such that for each $s\in S$ we have $T^\mathbb{R}_p \partial \O = T^\mathbb{R}_s \partial \O$. Suppose that there is a function $H:[0,\delta]\rightarrow\mathbb{R}$ such that for $s\in S,r\in (0,\delta)$ we have $\delta_\O(s_r;X) \geq H(r)$ and $$\displaystyle{\int_{0}^{\delta} \dfrac{H(r)}{r} dr = \infty}.$$ Then $\O$ fails the visibility property. Explicitly, there exists $\lambda>1$ such that for any $s\in S$,  $\O$ has $(\lambda,0)$-geodesics $\gamma_n:[0,1]\to\O$ with ${\max_{t\in[0,1]}}\delta_\O(\gamma_n(t))\to 0$, $\gamma_n(0)\to p$ and $\gamma_n(1)\to s$. 
\end{cor}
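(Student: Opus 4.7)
The plan is to adapt the construction in Theorem~\ref{thm:nonvisibleinC2} to the $\C$-convex setting, replacing the use of $M_{\O,s}(r)$ by the uniform lower bound $\delta_\O(s_r;X) \geq H(r)$ supplied by the hypothesis. I would fix a target $s \in S$. Since $T^\mathbb{C}_{s'}\partial\O = T^\mathbb{R}_{s'}\partial\O \cap iT^\mathbb{R}_{s'}\partial\O$ is constant along $S$, an affine change of coordinates lets me assume $p = 0$, $T^\mathbb{C}_p\partial\O = \{z_2 = 0\}$, the common real tangent along $S$ is $\{\Re z_2 = 0\}$, and $\eta := \eta_p = (0,1)$. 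Because $T^\mathbb{C}_p\partial\O$ is one complex dimensional, $s = cX$ for some $c \in \C$ with $|c| = \|s\|$ and $S \subset \C \cdot X$. I would then define $\gamma_n \colon [0,1] \to \O$ by $\gamma_n(t) := ts + f_n(t)\eta$, where $f_n$ is the $\mathcal{C}^1$-smooth increasing function with $f_n(0) = 1/n$ satisfying the balancing identity
\[
\delta_\O\bigl(\gamma_n(t);\,s\bigr) \;=\; \delta_\O\bigl(\gamma_n(t);\,f_n'(t)\eta\bigr),
\]
which rewrites as the explicit ODE $f_n'(t) = \|s\|\,\delta_\O(\gamma_n(t);\eta)/\delta_\O(\gamma_n(t);X)$; existence follows from the ODE existence theorem as in the original argument.

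The first step is to show $\max_{t\in[0,1]} \delta_\O(\gamma_n(t)) \to 0$ as $n \to \infty$. Since $ts \in S$ and $\gamma_n(t) = (ts)_{f_n(t)}$, the hypothesis yields $\delta_\O(\gamma_n(t); X) \geq H(f_n(t))$, and because $s = cX$ with $|c| = \|s\|$ this translates to $\delta_\O(\gamma_n(t); s) \geq H(f_n(t))/\|s\|$. On the other hand, $\gamma_n(t) - f_n(t)\eta = ts \in \partial\O$ forces $\delta_\O(\gamma_n(t); \eta) \leq f_n(t)$. Feeding both estimates into the balancing identity gives $H(f_n(t))\,f_n'(t)/f_n(t) \leq \|s\|$, and integrating via the substitution $y = f_n(t)$ produces $\int_{1/n}^{D_n} H(y)/y\, dy \leq \|s\|$, where $D_n := f_n(1) = \max_t f_n(t)$. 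The divergence assumption $\int_0^\delta H(y)/y\, dy = \infty$ forces $D_n \to 0$, hence $\max_t \delta_\O(\gamma_n(t)) \leq D_n \to 0$.

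To establish the $(\lambda,0)$-geodesic property for some $\lambda > 1$ independent of $n$, I would decompose $\gamma_n'(t) = s + f_n'(t)\eta$ with respect to the minimal basis at $\gamma_n(t)$, which approaches $\{X,\eta\}$ as $n \to \infty$ because the boundary is smooth along $S$. The upper bound in Lemma~\ref{lem:NPZlemma}, combined with $\kappa_\O \leq \delta_\O^{-1}$ from Lemma~\ref{lem:infinitesimallemma} and the balancing identity, yields $\kappa_\O(\gamma_n(t);\gamma_n'(t)) \leq 2 f_n'(t)/\delta_\O(\gamma_n(t);\eta)$, so $\lk_\O(\gamma_n|_{[t_1,t_2]}) \leq C\log(f_n(t_2)/f_n(t_1))$ as soon as $\delta_\O(\gamma_n(t);\eta) \gtrsim f_n(t)$. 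Conversely, Lemma~\ref{lem:classicallowerbound} for $\C$-convex domains produces $k_\O(\gamma_n(t_1),\gamma_n(t_2)) \geq \tfrac{1}{4}\log(\delta_\O(\gamma_n(t_2))/\delta_\O(\gamma_n(t_1)))$, and combined with $\delta_\O(\gamma_n(t)) \sim f_n(t)$ this gives a matching lower bound; taking $\lambda$ larger than four times $C$ yields the required quasi-geodesic property for $n$ large.

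The main obstacle is establishing the two-sided comparability $\delta_\O(\gamma_n(t)) \sim \delta_\O(\gamma_n(t); \eta) \sim f_n(t)$ in the $\C$-convex (not convex) setting, since the common supporting real hyperplane along $S$ used in Theorem~\ref{thm:nonvisibleinC2} is not available. I would overcome this by working in a $\mathcal{C}^1$-chart for $\partial\O$ along $S$: the constancy of $T^\mathbb{R}_{s'}\partial\O$ along $S$ implies that $\partial\O$ is locally the graph $\{\Re z_2 = \phi(z_1, \Im z_2)\}$ with $\phi(s',0) = 0$ and $\nabla\phi(s',0) = 0$ for every $s' \in S$, so that $\O$ locally sits in $\{\Re z_2 > 0\}$. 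A direct minimization argument for the Euclidean distance and for the directional distance in direction $\eta$, using $\phi(z) = o(|z - S|)$ from the vanishing of $\nabla\phi$ on $S$, then yields the required comparability with universal constants depending only on the modulus of continuity of $\nabla\phi$ along $S$. With this in hand, $\gamma_n(0) = (1/n)\eta \to p$ and $\gamma_n(1) = s + f_n(1)\eta \to s$ close the proof.
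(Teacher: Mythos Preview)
Your proposal is correct and follows the paper's approach: mimic Claims~2 and~3 of Theorem~\ref{thm:nonvisibleinC2} (Claim~1 being supplied by hypothesis), replacing the convex constants in Lemmas~\ref{lem:classicallowerbound} and~\ref{lem:infinitesimallemma} by their $\C$-convex analogues and invoking the full \cite[Proposition~14]{NPZ} in place of the subadditive upper bound of Lemma~\ref{lem:NPZlemma} (which, as the paper notes, relies on Lempert and hence on convexity). Your additional discussion of the comparability $\delta_\O(\gamma_n(t)) \sim f_n(t)$ via a local $\mathcal C^1$ graph is in fact more careful than the paper, which passes over this point.
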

\begin{proof}
	Note that the main use of convexity in the proof of Theorem \ref{thm:nonvisibleinC2} is in the proof of Claim 1. Corollary \ref{cor:corollaryforCconvex} includes the conclusion of Claim 1 in its assumptions. Then, the proof of Corollary \ref{cor:corollaryforCconvex} follows from the arguments provided in the proofs of Claims 2 and 3 in the proof of Theorem \ref{thm:nonvisibleinC2}. The only difference is to use the weaker estimates given in $\C$-convex cases of Lemmas \ref{lem:classicallowerbound} and \ref{lem:infinitesimallemma} and to apply the complete version of \cite[Proposition 14]{NPZ}. 
\end{proof}

We are curious to see if the converse of Theorem \ref{thm:nonvisibleinC2} holds.

\subsection{An application: Gromov non-hyperbolicity of convex domains}\label{sec:new}

In higher dimensions the complex line containing the complex tangential line segment may not agree with the complex line with the sufficient rate of contact, making the situation more complicated. To give an extension of Theorem \ref{thm:nonvisibleinC2} to higher dimensions we give the following definition. 

\begin{defn}
	Let $\Omega$ be a convex domain in $\Cn$ containing no affine complex lines and let $S\subset\partial \Omega$ be a line segment. We say that such a line segment is a \emph{non-Goldilocks line segment} if: \begin{enumerate}
		\item There exists a complex hyperplane $H\subset\Cn$ such that $S \subset H\cap \partial \Omega$.
		
		\item There exists $\delta_0 > 0$ and a unit vector $X \in \mathbb{C}^n$ such that for any $p\in S$ and $r\in (0,\delta_0)$ $p_r:=p+r X \in \Omega$ and $\delta_\Omega(p_r)=\|p-p_r\|=r$.   
		
		\item Let $s,q$ be two distinct points of $S$ and set $Y:=(s-q)/\|s-q\|$. There exists a function $H:(0,\delta_0)\to \mathbb{R}^+\cup\{0\}$ such that $\delta_\O(p_r;Y)\geq H(r)$ for all $p\in S$ and $r\in(0,\delta_0)$, moreover $$\displaystyle \int_{0}^{\delta_0} \dfrac{H(r)}{r} dr = \infty.$$
	\end{enumerate}
\end{defn}

It is easy to see from its proof that the domains given in Theorem \ref{thm:nonvisibleinC2} contain a non-Goldilocks line segment in the boundary. In fact, by applying the complete version of \cite[Proposition 14]{NPZ} and imitating the proof of Theorem \ref{thm:nonvisibleinC2} one may extend it to higher dimensions as follows.

\begin{cor}\label{cor:howtheresultextendtohigherdimensions}
	Let $\O\subset\Cn$ be a convex domain containing no affine complex lines and assume that $\partial \Omega$ contains a non-Goldilocks line segment $S$. Then any $p\in S$ is not a visible point. 
\end{cor}

For brevity, we refer the reader to \cite[Part III]{BH} for details on Gromov hyperbolicity. Recall the following result: 

\begin{res}\cite[Theorem 1.5]{BGZ}
	Let $\Omega$ be a Gromov hyperbolic convex domain containing no affine complex lines. Then the identity map extends to a homeomorphism between Gromov compactification of $\Omega$ (with respect to the Kobayashi distance) and the end compactification of its Euclidean closure. 
\end{res}

In view of the result above, visibility property of Gromov hyperbolic metric spaces with respect to their Gromov boundary (see for instance \cite[p. 428, Lemma 3.2]{BH}) and the geodesic stability lemma (see for instance \cite[p. 401, Theorem 1.7]{BH}) imply the following remark:

\begin{rem}\label{rem:filipposremark}
	Let $\Omega$ be a Gromov hyperbolic convex domain containing no affine complex lines. Then $\Omega$ satisfies the visibility property. 
\end{rem} 

Thanks to Corollary \ref{cor:howtheresultextendtohigherdimensions} and the remark above, we give the following Gromov non-hyperbolicity criterion for convex domains with no boundary regularity. 

\begin{thm}
	Let $\O\subset\Cn$ be a convex domain containing no affine complex lines and assume that $\partial \Omega$ contains a non-Goldilocks line segment $S$. Then $\Omega$ is not Gromov hyperbolic.
\end{thm}

Notably, the above result improves \cite[Theorem 1.6]{Z2}.

\subsection{Two examples: Goldilocks points, weakly Goldilocks points, and visible points}

\begin{rem}\label{rem:convexample} There exists a convex domain $\O$ with smooth boundary, and $p\in\partial\O$ such that $p\notin \partial_{lg}\O$ but $p$ is a weakly Goldilocks point.
\end{rem}

\begin{proof}
	Let $\Psi:\mathbb{R}\to\mathbb{R}$ defined by $\Psi(0):=0$ and $\Psi(x):=e^{-1/\sqrt{|x|}}$ if $x\neq 0$. For $j\in \mathbb{N}$, let $t_j:=2^{-j}$,
	and $L_j(x):=(\Psi(t_j)-\Psi(t_{j+1}))(x-t_{t+j})/(t_{j}-t_{j+1})+ \Psi(t_{j+1})$. Let $\Psi_0:\mathbb{R}\to\mathbb{R}$ be given by $\Psi_{0}(x):=\max\{\Psi(x),L_n(x):n\in 2\mathbb{N}\}$. Since $\Psi$ is convex and $\Psi(x)\leq L_j(x)$ if and only if $x\in [t_{j+1},t_j]$, $\Psi_0(x)$ is a continuous convex function. 
	
	Let $\rho:\mathbb{R}\to\mathbb{R}^+\cup\{0\}$ be a smooth even function with  $\text{supp}(\rho)\subset\subset(-1,1)$ and $\int_{-\infty}^{\infty} \rho(x) dx = 1$. Set $\rho_\epsilon(x):=\rho(x/\epsilon)/\epsilon$, then $\text{supp}(\rho_\epsilon)\subset\subset(-\epsilon,\epsilon)$ and $\int_{-\infty}^{\infty} \rho_\epsilon(x) dx = 1$. 
	% For instance, we may take $\rho(x)=e^{-1/(x^2-1)^2}/C$ if $x\in(-1,1)$ and $\rho(x)=0$ otherwise, where $C:=\int_{-1}^1 e^{-1/(x^2-1)^2}dx$. 
	
	For $j\in \mathbb{N}$ set $\Psi_\infty(x):=(\Psi_{0}\ast \rho_{t_j/4})(x)$ if $x\in [3t_j/4,5t_j/4]$ and $\Psi_{\infty}(x)=\Psi_0(x)$ otherwise. 
	
	\textit{Claim 1:} $\Psi_\infty$ is a smooth convex function and $\Psi_\infty(x)\geq \Psi_0(x)$.
	
	For $j>0$ and $x \in [3t_j/4,5t_j/4]$ we have that 
	$\Psi_\infty^{(2)}(x) = (\Psi_{0}^{(2)}\ast \rho_{t_j/4})(x)$ hence it follows from the convexity of $\Psi_0$ that $\Psi_\infty$ is also a convex function. Furthermore since $\rho$ is even and $\Psi_0$ is convex, by definition of $\Psi_\infty$ one concludes that $\Psi_\infty(x)\geq\Psi_0(x)$. Note that due to our construction, the intervals $[3t_j/4,5t_j/4]$ are of positive distance with one another. It is not difficult to see that this implies smoothness of $\Psi_\infty$ away from the origin. To see that it is smooth at the origin we will show that \begin{equation}\label{eqn:thisfunctionissmooth}
		\lim_{x\to 0}{\Psi_\infty^{(n)}(x)}=0, \:\:\:\:\: \forall n\in \mathbb{N}.
	\end{equation}
	
	Set $S_0:= \cup_{n=0}^\infty [3t_{n}/4,5t_{n}/4]$ and $S_1:= \mathbb{R}\setminus S_0$. Then $\Psi_\infty(x)=\Psi_0(x)$ for $x\in S_1$, consequently $$\lim_{S_1 \ni x\to 0}{\Psi_\infty^{(n)}(x)}=0, \:\:\:\:\: \forall n\in \mathbb{N}.$$
	
	Fix $j>0$ and let $x \in [3t_j/4,5t_j/4]$. Then, 
	$\Psi_\infty^{(n)}(x) = (\Psi_{0}\ast \rho^{(n)}_{t_j/4})(x)$ so $$ \|\Psi_\infty^{(n)}(x)\| \leq \frac{ 2^{2n+1} C_n \Psi_0(5t_{j}/4) } {t_j^n} \leq \frac{ 2^{2n+1} C_n \Psi(2 t_j) } {t_j^n}, $$ where $C_n:=\max_{x\in [-1,1]} |\rho^{(n)}(x)|$. As $\lim_{x\to 0} \Psi(2 x)/x^n = 0$ for any $n\in\mathbb{N}$ the above estimate leads to $$\lim_{S_0 \ni x\to 0}{\Psi_\infty^{(n)}(x)}=0, \:\:\:\:\: \forall n\in \mathbb{N},$$ and by \eqref{eqn:thisfunctionissmooth} Claim 1 follows. 
	
	Let $\O:=\{(z_1,z_2)\in\C^2: \text{Re}(z_2) > \Psi_\infty( \text{Re} (z_1))\}$ and $p:=(0,0)$. Claim 1 implies that $\O$ is a convex domain containing no affine complex lines with smooth boundary. 
	
	\textit{Claim 2.} $p\in\partial \O$ is a weakly Goldilocks point. 
	
Since $M_{\O,p}(r) = \Psi^{-1}_{\infty}(r) \leq \Psi^{-1}_0(r) \leq 1/ \log^2(1/r)$ we have that $p$ is a weakly Goldilocks point. 
	
	\textit{Claim 3.} $p\in\partial \O$ is a non-Goldilocks point.
	
	Let $U$ be any neighbourhood of $p$. Keeping the notation above, let $j$ be large enough so that, the complex line $\Re z_2 = L_{2j}(\Re z_1)$ has a non-empty relatively open intersection with $U\cap\partial\O$. Take $p'$ in the relative interior of the intersection, denote the inner unit normal to $\partial\O$ at $p'$ by $\eta_{p'}$ and let $X:=(1,(\Psi_0(t_{2j})-\Psi_0(t_{2j+1}))/(t_{2j}-t_{2j+1})).$ Then, by a direct calculation we have $$\lim_{r \to 0} \delta_\O(p'+ r \eta_{p'}; X/\|X\|)>0.$$ As $U$ was arbitrary, we conclude that $p\notin \partial_{lg} \O$. 
	
	By Claims 2 and 3, we are done.
\end{proof}

\begin{rem}\label{rem:convexample2}
	There exists a convex domain $\O$ with smooth boundary such that the set of non-visible points in $\partial\O$ is not a closed set. 
\end{rem}

\begin{proof}
Keeping the notation in the proof of Remark \ref{rem:convexample} consider the real hyperplanes
given by $\Re z_2 = L_j(\Re z_1)$. Set $$\Psi_0(\Re z_1,\Im z_1, \Im z_2):=\max\{\Psi(\sqrt{\|z_1\|^2+|\Im(z_2)|^2}), L_n(\Re z_1):n\in 2\mathbb{N}\}.$$
%passing through $1/2[(t_{2j},\Psi(t_{2j})) + (t_{2j+1},\Psi(t_{2j+1}))]$ real normal to the vector $(-(\Psi(t_{2j})-\Psi(t_{2j+1}))/(t_{2j}-t_{2j+1}),1)$, which is 
	
	By taking appropriate convolutions as in the proof of Remark \ref{rem:convexample}, one may find a smooth convex function $\Psi_\infty$ that agrees with $\Psi_0$ outside of small neighborhoods of the disjoint sets $\partial E_{2j}$, where
	\[
	E_{2j}:= \left\{ 
	(z_1, \Im z_2) \in \mathbb{C}\times \mathbb{R}:
	\Psi(\sqrt{\|z_1\|^2+|\Im(z_2)|^2}) \le 
	L_{2j}(\Re z_1)
	\right\},
	\]
and so $\Psi_\infty$ agrees with each $L_{2j}$ 	in mutually disjoint neigbourhoods of $((1/2)(t_{2j}+t_{2j+1}),0)\in \C \times \mathbb R$. Then the domain $\O:=\{(z_1,z_2)\in\C^2: \Re (z_2) > \Psi_\infty(z)\}$ is a convex domain containing no affine complex lines, with smooth boundary. The domain $\O$ fails the visibility property as it contains non-trivial disjoint open complex faces denoted by $F_j \subset \{\Re z_1 >0\}$. In fact, for any $j$, one may find non-visible pairs $\{p_j,q_j\}\in F_j$. Since $p_j\to p:=(0,0)$, $p$ is a limit of non-visible points.
	% in neighbourhoods of $1/2[(t_{2j},\Psi_0(t_{2j})) + (t_{2j+1},\Psi_0(t_{2j+1}))]\in \partial\O$. By construction $p_j\to p:=(0,0)$ so $p$ is a limit of non-visible points.
	Note that $\partial\O$ consists of strongly pseudoconvex points away from $F:=\{p\}\cup\left(\cup_{n=0}^\infty F_n\right)$. Furthermore,
$F_j\subset \{(z_1, \Psi_\infty(z_1,y_2)+iy_2): (z_1,y_2) \in E_{2j}\}$. But it is easy to see that $E_{2j}\subset [t_{2j+1},t_{2j}]\times i\mathbb R\times \mathbb R$, so that for $
(x_1+iy_1,y_2) \in E_{2j}$,
\[
 \frac12(t_{2j+2}+t_{2j+1}) <
t_{2j+1} \le x_1 \le \sqrt{x_1^2+y_1^2+y_2^2}
\le \Psi^{-1}(L_{2j}(x_1)) \le t_{2j} < \frac12(t_{2j-1}+t_{2j}).
\]

	Take $q\in\partial\O$ with $q\neq p$ then observe that by above any curve joining points tending to $p$ and $q$ respectively must have a part crossing the ``sphere'' 
	\[
	\{ (z_1, \Psi_\infty((z_1,y_2)+iy_2): \|(z_1,y_2)\| = \frac12(t_{2j+1}+t_{2j})\}, 
	\]
	and so
	tending to $\partial\O\setminus F$. Then, the proof of \cite[Theorem 1.4]{BZ2} shows that $p$ is a visible point, so the set of non-visible points in $\partial\O$ is not closed.
\end{proof}

Note that the set of non-Goldilocks points of $\partial\O$ is closed by definition. Then the example above shows that failure of the local Goldilocks point conditions cannot describe the set of non-visible points, as the set of non-visible points is not necessarily closed.

\qquad

\noindent	\textbf{Acknowledgements.} The author is grateful to his advisor Pascal Thomas for introducing him to the problem, and for his valuable guidance and endless patience throughout this project. The author is thankful to his coadvisor Nikolai Nikolov for his support and many suggestions. The author thanks to Filippo Bracci for informing him about one of their results which made possible to include Subsection \ref{sec:new} in the second version of this manuscript. The author is also thankful to the referee for a careful reading of the manuscript and valuable remarks. 
\kern-3em
{}

\end{document}